\newcommand{\RR}{{\mathbb R}}
\newcommand{\NN}{{\mathbf N}}
\newcommand{\comp}[1]{{#1^{\mathrm{c}}}} % complémentaire d'un ensemble
\newcommand{\vit}{{\mathcal V}} % champ de vitesse
\newcommand{\acc}{{\mathcal A}} % champ d'accélération
\newcommand{\pp}[1]{\prescript{\perp}{}{#1}} % symbole perp en haut à gauche de l'argument
\newcommand{\potlmt}{\varphi} % potentiel limite
\newcommand{\one}{{\mathbf 1}} % fonction indicatrice
\newcommand{\proba}{{\mathcal P}} % espace de mesures de proba
\newcommand{\cont}{C} % espace des fonctions continues
\newcommand{\contcomp}{\cont_{\mathrm{c}}} % espace des fonctions continues à supp compact
\newcommand{\contbound}{\cont_{\mathrm{b}}} % espace des fonctions continues bornées
\newcommand{\eps}{\varepsilon}
\newcommand{\dd}{{\mathrm d}} % d dérivation
\newcommand{\funceps}[1]{#1^{\eps}} % fonction indexée par un paramètre epsilon
\newcommand{\funcepsn}[2]{#1^{\eps_{#2}}} % fonction indexée par une suite epsilon
\newcommand{\chgvar}[1]{\tilde{#1}} % notation de la nouvelle variable après changement de variable
\newcommand{\rot}[1]{\mathcal{R} \left ( #1 \right )} % opérateur de rotation, argument = angle
\newcommand{\wass}{W} % distance de Wasserstein
\newcommand{\uvec}{e_3} % vecteur unitaire selon z
\newcommand{\adim}[1]{\hat{#1}} % notation d'une grandeur adimensionnée
\newcommand{\transpt}{{\mathcal T}} % application de transport entre deux mesures de proba
\newcommand{\ttranspt}{{\tilde{\mathcal T}}}
\newcommand{\pushfwd}[1]{#1 \#} % push forward
\newcommand{\kervit}{K^{\mathcal{V}}} % noyau pour le champ de vitesse
\newcommand{\keracc}{K^{\mathcal{A}}} % noyau pour le champ d'accélération
\newcommand{\conv}{\ast} % convolution
\newcommand{\ball}{B} % boule ouverte
\newcommand{\bigO}{\mathcal{O}}
\DeclareMathOperator{\esssup}{ess \, sup} % essential supremum
\DeclareMathOperator{\divg}{div} % divergence
\DeclareMathOperator{\supp}{Supp} % support d'une fonction
\DeclareMathOperator{\dist}{dist} % distance à un ensemble
\newcommand{\var}{0.2} % delta dans A_{\delta}
\newcommand{\bd}{2.5} % 1/2 longueur de l'intervalle de définition
\newtheorem{theorem}{Theorem}
\newtheorem{definition}{Definition}
\newtheorem{prop}{Proposition}[section]
\newtheorem{lemma}{Lemma}[section]
\title{Well-posedness of a 2D gyrokinetic model with equal Debye length and Larmor radius}
\author[1]{Pierre-Antoine Giorgi}
\author[2]{Maxime Hauray}
\affil[2]{Aix Marseille Univ, CNRS, Centrale Marseille, I2M, Marseille, France}
\affil[1]{Université de la Polynésie Française}
\date{\today}
\begin{document}
\maketitle

\begin{abstract}
We study here a 2D gyrokinetic model obtained in~\autocite{BOS2016-short}, which naturally appears as the limit of a Vlasov-Poisson system with a very large external uniform magnetic field in the finite Larmor radius regime, when the typical Larmor radius is of order of the Debye length.
We show that the Cauchy problem for that system is well-posed in a suitable space, provided that the initial condition satisfies a standard uniform decay assumption in velocity.  Our result relies on a  stability estimate in Wasserstein distance of order one between two solutions of the system.  That stability estimate directly implies the uniqueness (in an appropriate space) of solution to the Cauchy problem. An extension of the stability estimate to the case of a regularized interaction allows to prove the existence of solutions, as limits of solutions of a similar system with regularized interactions.
%little more work also to  and a new proof of the existence of such solutions is given. The advection field is then Lipschitz continuous.
\end{abstract}

\section{Introduction}

\subsection{The origin of the model under investigation}
The equation considered in this article is a limit equation of the 2D Vlasov-Poisson system with a large external uniform magnetic field, in the finite Larmor radius regime. The finite Larmor radius regime means that while the strength of the magnetic field is sent to infinity, the spatial scale in the direction perpendicular to the magnetic field is set appropriately in such a way that the fast gyration of the charged particle under investigation is performed with a finite Larmor radius\footnote{if no spatial rescaling is done, the Larmor radius goes to zero and this regime is usually  called the "guiding center regime".}.

This regime is commonly used in the physics of fusion plasmas  because it is a relevant approximation to model and simulate the core of a tokamak~\cite{Grandgirard}, where knowledge of the particle distribution function at the scale of the typical Larmor radius is important. Usually, the Debye length remains much smaller than the typical Larmor radius, so that at the Larmor scale the plasma is quasi-neutral. It means that at Larmor scale, the gyrokinetic equation for the distribution function of the gyrocenter is coupled with an electro-neutrality equation. 

But from a mathematical viewpoint, quasi-neutral Vlasov equation are very difficult to study. Their associated Cauchy problem are in full generality ill-posed~\cite{BAR2012}. Recently \citeauthor{HK2016} showed in a deep paper~\cite{HK2016} that with Penrose stable~\cite{Penrose} initial data, a rigorous limit from Vlasov-Poisson towards a quasi-neutral Vlasov equation is possible. However, the combination of that quasi-neutral limit and the large magnetic field limit (with the appropriate ordering : the Debye length much smaller than the typical Larmor radius) seems a very difficult mathematical challenge.

Another regime, physically less pertinent but mathematically simpler has been investigated since roughly twenty years:  the limit of the 2D Vlasov-Poisson equation with a large magnetic field in the finite Larmor radius regime, when the typical Larmor radius is of the same order than the Debye length. It has been originally studied mathematically by E. Frénod and E. Sonn\"endrucker \cite{FRE1998,FRE2001,FRE2010}), and then by Bostan~\cite{BOS2009} and Han-Kwan~\cite{HK2012}. In the above mentioned works, the limit is a system of PDE in which the advection field is obtained by a two scale homogenization procedure, and is not fully explicit.

Recently, a simpler description of the limit was presented by~\citeauthor{BOS2016-short} in~\autocite{BOS2016-short}, with the rigorous proof of the convergence that can be found in the companion paper~\cite{BOS2016}. The idea was to average directly the fast gyration in the two particle interaction kernel, rather than in the Vlasov equation and the Poisson equation independently. In short to average the relative motion of the particle that creates the electric field and of the particle that is submitted to the field together. 
This idea leads to an evolution equation on  $f$ the density distribution of the gyrocenter (center of fast circle of gyration) $x \in \RR^2$, and velocities $v = \rho e^{i \theta} \in \RR^2$ identified with $\mathbb C$, where $\rho \in \RR$ is the Larmor radius of the particle and $\theta \in \RR$ is an important phase parameter. We emphasize that $x$ and $v$ are not the position and velocity of a particle. It means that the particles are moving (infinitely) fast on the gyrocircle of center $x$ and radius $\rho$. Since the very fast rotation of all the particles is performed \emph{at the same speed}\footnote{The speed of rotation depends on the strength of the magnetic field that is considered uniform here.}, no homogenisation will hold on these circles, and it is crucial to keep track of the relative position of the particles on the circle. This explains why the limit model still contains a full $v$ variable and not only the Larmor radius~$\rho$.

\begin{figure}[ht]
    \centering
    \includegraphics[width=.7\textwidth]{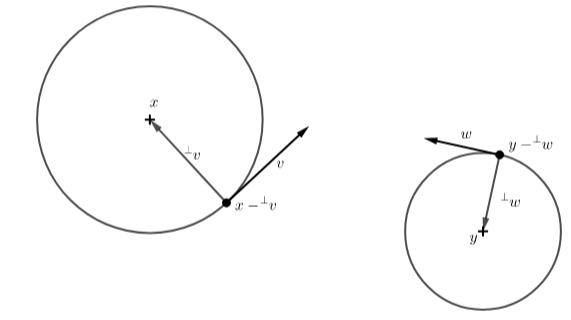}
    \caption{Two fast rotating particles around their gyrocenters. As they rotate at same speed, their relative motion is still circular. For simplicity, we assume as in the rest of the article, that the Larmor pulsation $\omega_c = \frac{|q|B}{2m} =1$.}
    \label{fig:gyrocentres}
\end{figure}

\paragraph{Motivation and interest of our result} 

We briefly recall how \citeauthor{BOS2016} \autocite{BOS2016} obtained the limit equation~\eqref{limit-equation}. Their results are summarized in~\autocite{BOS2016-short}.
Given an external uniform magnetic field $B$, they consider the Vlasov-Poisson system for one species of charged particles in two dimensions, in the plane perpendicular to the magnetic field $B$, which reads after proper scaling:
\begin{align*}
	\partial_{t} \funceps{f} + \frac{1}{\eps} \left ( v \cdot \nabla_{x} \funceps{f} + \omega_c \pp{v} \cdot \nabla_{v} \funceps{f} \right ) - \nabla_{x} \funceps{\phi} \cdot \nabla_{v} \funceps{f} &= 0, \\
	- \Delta_x \funceps{\phi} &= \int_{\RR^2} \funceps{f} \, \dd v,
\end{align*}
for $t>0$ and $(x,v) \in \RR^2 \times \RR^2$.
The number $\eps$ is the ratio of the Debye length in the direction perpendicular to the magnetic field to the observation scale\footnote{the usual 3D formula for the Debye length should not be used here, where the scaling in position depends on the direction. Here the Debye length in the perpendicular direction should be defined - as usual in fact - as the scale of the electric phenomena in the directions perpendicular to the magnetic field.}.
$\omega_c = \frac{q |B|}{2m}$ is the Larmor pulsation. In the sequel, we will assume that it is equal to $1$. It has no incidence on the mathematical analysis, and could physically be done with the help of the ad-hoc change of time scale\footnote{at the price of introducing a different factor in the equation for the potential.}.  

A large magnetic field induce a fast gyration of the charged particles around the magnetic lines. For that reason,  they perform the change of variables
\[
	\chgvar{x} = x + \pp{v}, \qquad 
	\chgvar{v} = \rot{\frac{t}{\eps}}v,
\]
which is done to filter out the fast circular motion that occurs at the scale of the cyclotronic period.
$\rot{\theta}$ denotes the rotation operator of angle $\theta$ in $\RR^2.$
The distribution of the couple "gyrocenter" and "filtered velocity" $\funceps{\chgvar{f}}$ defined by
\[
\funceps{\chgvar{f}} (t,\chgvar{x},\chgvar{v}) = \funceps{f} \left (t, \chgvar{x} - \rot{-\frac{t}{\eps}}\pp{\chgvar{v}}, \rot{-\frac{t}{\eps}} \chgvar{v} \right )
\]
satisfies the equation
\[
	\partial_t \funceps{\chgvar{f}} - \pp{\nabla}_x \funceps{\phi} \cdot \nabla_{\chgvar{x}} \funceps{\chgvar{f}} - \rot{\frac{t}{\eps}} \nabla_x \funceps{\phi} \cdot \nabla_{\chgvar{v}} \funceps{\chgvar{f}} = 0.
\]
\citeauthor{BOS2016} proved, using compactness methods, that there exists a sequence $(\eps_n)_n$ converging to $0$ such that $( \funcepsn{\chgvar{f}}{n} )$ converges strongly in $L^2([0,T] \times \RR^2 \times \RR^2)$, for all $T>0$, towards a solution $f$ of equation~\eqref{limit-equation}.

Here we shall provide a stability result on the limit equation that also implies the uniqueness of solution in an appropriate class. However, we cannot conclude from our result the convergence of the full sequence $(f^\eps)_{\eps >0}$ above, because in~\cite{BOS2016} they provide a global $L^2$ bound on the limit $f$ while we need a kind of weighted uniform control (see Definition~\ref{deft-norm-gamma}) to conclude to the uniqueness. 

Our second result is an existence result, based on a different strategy than in~\cite{BOS2016}: we mollify the equation and use a stability estimate to prove the convergence of solutions of this mollified equation to a solution of~\eqref{limit-equation} as the mollification parameter goes to zero.

%Moreover, a completely different proof of the existence of solutions is given, based on the stability estimate found, which uses the Wasserstein distance of order $1$.

\subsection{Precise form of the equation}
Using the above idea, the study leads to a limit equation that contains an explicit mean-field advection term (from now on we will write the time variable in subscript for convenience):
\begin{equation}\label{limit-equation}
	\partial_t f_t + \vit[f_t] \cdot \nabla_x f_t + \acc[f_t] \cdot \nabla_v f_t = 0, \quad t>0, \quad (x,v) \in \RR^2 \times \RR^2.
\end{equation}
The velocity (resp. acceleration) field $\vit$ (resp. $\acc$) are rotated gradient in $x$ (resp. in $v$)  
of the potential $\potlmt$
\begin{equation} \label{relation-vit-acc-pot}
	\vit[f] = - \pp{\nabla}_x \potlmt[f], \qquad 
	\acc[f] = \pp{\nabla}_v \potlmt[f],
\end{equation}
\[
	\potlmt[f_t](x,v) = \bigl[K * f_t \bigr] (x,v)  = - \frac{1}{2 \pi} \int_{\RR^4} \ln \bigl(|v-w| \vee  |x-y| \bigr)  f_t(y,w) \, \dd y \dd w,
\]
where $\pp{u}$ denotes the perpendicular vector $\pp{u} = (u_2, -u_1)$ for all $u=(u_1, u_2) \in \RR^2$.
The two particles potential $K$ is symmetric in $(x,v)$ and defined by\footnote{$a \vee b$ stand for the maximum among the two real numbers $a$ and $b$.}
\begin{equation} \label{eq:def_pot}
K(x,v) = - \frac{1}{2 \pi}  \ln \bigl( |x| \vee  |v| \bigr).
\end{equation}
The advection fields could also be written directly using the kernel 
\begin{equation} \label{eq:def_J}
J(x,v) = \frac{\pp{x}}{2 \pi |x|^2}\one_{|x| \ge |v|}.
\end{equation}
Remark that the derivative of $K$ are respectively $\nabla_x K(x,v) =J(x,v)$ and its symmetric expression $\nabla_v K(x,v) =J(v,x) = J \circ S (x,v)$, where  $S$ denotes the permutation $S(x,v) = (v,x)$. So the fields $\vit$ and $\acc$ can be rewritten as
\begin{align}
	\vit[f(t)](x,v) &= 
	%\bigl[J * f(t)\bigr] (x,v)= \frac{1}{2 \pi} \int_{\RR^4} f(t,y,w) \frac{\pp{(x-y)}}{|x-y|^2} \ \one_{|x-y| > |v-w|} \, \dd y \dd w,\label{vit-expr} \\
	\bigl[J * f\bigr] (x,v)= \frac{1}{2 \pi} \int_{\RR^4} J(x-y,v-w) f(\dd y,\dd w) ,\label{vit-expr} \\
	\acc[f(t)](x,v) &= 
	%\bigl[(J \circ S)  * f(t)\bigr] (x,v)=  - \frac{1}{2 \pi} \int_{\RR^4} f(t,y,w) \frac{\pp{(v-w)}}{|v-w|^2} \ \one_{|x-y| \leq |v-w|} \, \dd y \dd w.
	\bigl[(J \circ S) * f\bigr] (x,v)= \frac{1}{2 \pi} \int_{\RR^4} J(v-w,x-y) f(\dd y,\dd w).
	\label{acc-expr}
\end{align}
%These are the expressions that will be used in this article.
We will mostly use the latter expressions. Given the symmetry in positions and velocities between these two expressions, generally in the following  we will prove the results only for the velocity field $\vit$, the proof for $\acc$ being analogous, mostly up to a permutation of the variables $(x,v)$. 

\paragraph{Some explanation on the origin of that new potential}

We recall that a gyrocenter at position $x$ and ``velocity'' $v$ exactly means that the associated particle is rotating infinitely rapidly on the circle of center $x$ and radius $|v|$ (with a phase  $\theta$ defined by $v = |v| e^{i \theta}$). The value of the potential $K(x,v)$ is exactly the electric potential induced by that fast gyrating particle on a test particle at position $0$.
That formula arises from standard electrostatic considerations: 
\begin{itemize}
    \item when $|x| \ge |v|$, $0$ is outside the circle and the induced potential is equivalent to the repulsive one created by a fixed charge at $x$.
    \item when $|x| < |v|$, $0$ is inside the circle and the induced electric field should vanish, and the potential is constant in $x$.
\end{itemize}

This is also relevant in the case where two rotating particles interact. In order to simplify the explanation, we identify here $\RR^2$ with $\mathbb C$ and consider $x,v \in \mathbb C$. If the first particle rotate around position $x$, starting from position $x- \pp{v}$, then its movement along time is $X(t) = x - \pp{v} e^{i \alpha t}$, with $\alpha$ very large. For the second particle, rotating rapidly around $y$ with starting position $y+w$, then the movement is $Y(t) = y -\pp{w} e^{i \alpha t}$ with the same $\alpha$. So the relative movement is  $Y(t) - X(t) = y-x - \pp{(w-v)}e^{i \alpha t}$ and the ``relative'' particle behave exactly like she had a gyrocenter at position $y-x$ and velocity $w-v$. So the average electric potential created by the second particle on the first will be given by  $K(y-x,w-v)$.

\subsection{Notation and rigorous definition of solutions} \label{subsec:not}

We denote by $\proba(\RR^4)$ the set of probability measures on $\RR^4$, and $\proba_1(\RR^4)$ the subset of $\proba(\RR^4)$ composed of probability measures with finite first order moment, namely
\[
\int_{\RR^{4}} (|x|+|v|) \, \mu(\dd x, \dd v) < +\infty.
\]
We recall that the time variable $t$ is written in subscript for commodity reasons. 
% Thus, $f_t$ will denote $f(t,\cdot,\cdot)$, and $\int_{\RR^4} g(x,v) f_t(\dd x, \dd v)$ means that we integrate $g$ against the measure $f_t$.

\begin{definition} \label{def:sol}
	 We will say that $f \in L^1_{loc}\bigl(\RR_+;(L^1 \cap L^\infty)(\RR^4)\bigr)$ is a weak solution to~\eqref{limit-equation} with a given initial datum $f_0 \in \proba(\RR^4)$ if
for all $\psi \in \contcomp^1(\RR_+\times\RR^4)$,
	\begin{align}
		&\int_{\RR_+ \times \RR^4} f(t,x,v) \bigl(\partial_t + \vit[f(t)](x,v) \cdot \nabla_x + \acc[f(t)](x,v) \cdot \nabla_v \bigr) \psi (t,x,v) \, \dd t \dd x \dd v \nonumber \\
		&= - \int_{\RR^4} f_0(x,v) \psi(0,x,v) \, \dd x \dd v, \label{weak-sense-limit-equation}
	\end{align}
	where $\vit[f(t)]$ and $\acc[f(t)]$ is given by~\eqref{vit-expr} and~\eqref{acc-expr}.

\end{definition}

The requirements that $f_t \in L^1\cap L^\infty$ for (almost) all times $t \ge 0$ implies that the advection fields $\vit[f_t],\acc[f_t]$ are bounded for any time (See Proposition~\ref{bounded-fields} below). In that case the integral appearing in the above definition is correctly defined.
We next define very useful norms related to the decays of distribution at infinity.
\begin{definition}\label{deft-norm-gamma}
	For any $\gamma>0$ and any $f \in L^{\infty}(\RR^4)$, define the following norm :
	\[
		\|f\|_{\gamma} = \underset{(x,v) \in \RR^2\times\RR^2}{\esssup} (1+|x|)^{\gamma}(1+|v|)^{\gamma}|f(x,v)| \in [0,+\infty].
	\]
	We denote by $L^\infty_\gamma(\RR^4)$ the associated Banach space of function :
	\[
	L^\infty_\gamma(\RR^4) = \left\{ 
	f \in L^\infty(\RR^4) ; \| f \|_\gamma < \infty
	\right\}.
	\]
\end{definition}

We have the quite simple estimates (stated without proofs)
\begin{equation}\label{eq:prop_norm_gamma}
    \begin{aligned}
    \|f \|_\infty & \le \| f\|_\gamma, \\
    \text{and for } \gamma > 2, \; \| f\|_1 & \le \kappa_\gamma^2  \|f \|_\gamma \quad\text{with }
    \kappa_\gamma := \int_{\RR^2} \frac{\dd y}{(1+|y|)^\gamma}.
    \end{aligned}
\end{equation}

\subsection{Main results}

Our first result is a stability result in the Wasserstein metric $W_1$ for solutions of~\eqref{limit-equation} that decrease sufficiently fast at infinity in velocity and position: they should belong to $L^1_{\mathrm{loc}}\bigl(\RR^+,L^\infty_\gamma(\RR^4)\bigr)$ for some $\gamma >2$.

\begin{theorem}[Wasserstein stability]
\label{theorem-stability}
    Let $\gamma >2$. There exists a numerical constant $C_\gamma$ such that for $f_t, \, g_t \in L^1_{\mathrm{loc}}(\RR^+,(\proba_1 \cap L^\infty_\gamma)(\RR^4))$  weak solutions to \eqref{limit-equation}, then for all $t \ge 0$ %there exist two positive constants $C_1=C_1(\gamma,\|f_0\|_{\gamma},\|g_0\|_{\gamma},\|f_0\|_{L^{\infty}},\|g_0\|_{L^{\infty}})$ and $C_2=C_2(\|f_0\|_{L^{\infty}},\|g_0\|_{L^{\infty}})$ such that the following stability estimate holds :
	\begin{equation}\label{stability-estimate}
		\wass_1(f_t,g_t) \leq \wass_1(f_0,g_0) 
		e^{C_\gamma \int_0^t ( \|f_s\|_\gamma + \|g_s\|_\gamma ) \dd s},
		%e^{C_1[(1+C_2t)^{2\gamma+1}-1]}, \quad t\geq 0,
	\end{equation}
	where $\wass_1$ denotes the Wasserstein distance of order $1$.
\end{theorem}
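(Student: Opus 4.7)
The plan is a Dobrushin-style coupling argument along characteristics, reducing \eqref{stability-estimate} to a Gr\"onwall inequality. The technical crux is a weighted-Lipschitz estimate for the singular kernel $J$ that exploits the cutoff $\one_{|x|\ge|v|}$ in dimension four.

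First I would establish that, for $f\in L^\infty_\gamma$ with $\gamma>2$, the full vector field $V[f]:=(\vit[f],\acc[f])$ is bounded (already given by the proposition on bounded fields) and Lipschitz on $\RR^4$ with Lipschitz constant $C_\gamma\|f\|_\gamma$. This lets me define the Cauchy-Lipschitz flow $\Phi^f_t:\RR^4\to\RR^4$, and by standard arguments for linear transport equations with Lipschitz fields, the weak solution satisfies $f_t = \Phi^f_t\#f_0$. Proceeding identically with $g$, I take an optimal $\wass_1$-coupling $\pi_0$ of $(f_0,g_0)$, define $\pi_t := (\Phi^f_t,\Phi^g_t)_\#\pi_0 \in \Pi(f_t,g_t)$ and set
$$D(t) := \int_{\RR^4\times\RR^4} |Z_1-Z_2|\,d\pi_t(Z_1,Z_2),$$
so that $\wass_1(f_t,g_t)\le D(t)$ and $D(0)=\wass_1(f_0,g_0)$. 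Differentiating $D$ and writing each velocity field as an integral against the $\pi_t$-marginals, then splitting the integrand as $|J(Z_1-w_1)-J(Z_2-w_2)|\le |J(Z_1-w_1)-J(Z_1-w_2)|+|J(Z_1-w_2)-J(Z_2-w_2)|$ and applying Fubini, the differential inequality reduces to the single estimate
$$\int_{\RR^4} |J(Z-w_1)-J(Z-w_2)|\,h(Z)\,dZ \;\le\; C_\gamma\, \|h\|_\gamma\,|w_1-w_2| \qquad (h\in L^\infty_\gamma,\; \gamma>2).$$
Applying this with $h=f_t$ (after integrating $Z_2$ out) and $h=g_t$ (after integrating $w_1$ out) yields $D'(t)\le C_\gamma(\|f_t\|_\gamma+\|g_t\|_\gamma)\,D(t)$, and Gr\"onwall gives the conclusion; the acceleration term is handled symmetrically via $J\circ S$.

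The main obstacle is the displayed inequality, which also underlies the Lipschitz regularity of $V[f]$ invoked above. A line-integral representation reduces it to a uniform bound of the form $\int |\nabla J(Z-V)|\,h(Z)\,dZ \le C_\gamma\|h\|_\gamma$ for all $V\in\RR^4$. The distribution $\nabla J$ splits into a principal-value part dominated by $|z_x|^{-2}\one_{|z_x|\ge|z_v|}$ plus a surface term on $\{|z_x|=|z_v|\}$ with density of order $|z_x|^{-1}$. The key structural point is that the cutoff restricts the $z_v$-integration to a disc of radius $|z_x|$, contributing a volume factor of order $|z_x|^2$ that exactly cancels the $|z_x|^{-2}$ singularity and makes the bulk part of $|\nabla J|$ locally integrable on $\RR^4$; the codimension-one surface term is tamed analogously. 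Global integrability at infinity is then ensured by the decay weight $(1+|x|)^{-\gamma}(1+|v|)^{-\gamma}$ built into $\|\cdot\|_\gamma$ for $\gamma>2$.
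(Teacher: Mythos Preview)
Your overall architecture is the same as the paper's: push an optimal initial coupling forward along the two characteristic flows, bound the time derivative of the transport cost by a kernel-increment estimate, and close with Gr\"onwall. The paper uses transport maps rather than plans and packages the double integral in a single key estimate (its Proposition~\ref{prop:key_estim}) rather than splitting early by the triangle inequality, but these are organizational choices, not mathematical ones.

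The one substantive difference is how the kernel bound is obtained. You propose a line-integral reduction to $\int|\nabla J(Z-V)|\,h(Z)\,dZ\le C_\gamma\|h\|_\gamma$, reading $\nabla J$ distributionally with a surface part on $\{|z_x|=|z_v|\}$; your dimensional count of both the bulk and the codimension-one pieces is correct. The step that deserves more care is the line-integral representation itself: since $J$ jumps across a hypersurface, the inequality $|J(a)-J(b)|\le|a-b|\int_0^1|\nabla J(\gamma(s))|\,ds$ is not literally available and must be recovered through mollification or BV slicing. The paper avoids this by an elementary pointwise bound (Lemma~\ref{lem:var_J}) that splits $|J(x,v)-J(x_*,v_*)|$ into a ``bulk'' term $\lesssim\frac{\one_{|v|\le|x|}}{|x|^2}|x-x_*|$ and ``gear'' terms $\frac{1}{|x|}\one_{|v|\le|x|\le|v|+\delta}$ supported on annuli of width $\delta=|x-x_*|+|v-v_*|$; these gear terms are exactly your surface contribution, but written as genuine functions, so the subsequent integrations (Lemmas~\ref{lem:int_bound_1}--\ref{lem:int_bound_2}) are entirely elementary. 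Either route closes; the paper's buys a self-contained argument with no distribution theory, yours buys a cleaner explanation of \emph{why} the cutoff $\one_{|x|\ge|v|}$ renders the four-dimensional singularity harmless.
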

That stability result implies the uniqueness of the solution in the $L^\infty_\gamma$ class.
An assumption that is not very demanding as we will show that such norms are in fact propagated for classical solutions.  
%Whereas in the Vlasov-Poisson case with bounded densities, there exists a logarithmic Grönwall estimate, here we have a simpler linear Grönwall estimate for solutions with bounded $L^\infty_\gamma$-norms.

Next using a classical approximation procedure, we will show the following existence result.

\begin{theorem} [Existence of global weak solutions] \label{theorem-existence}
	Let $f_0 \in \proba_1(\RR^4) \cap L^\infty_\gamma(\RR^4)$ for some $\gamma>2$. Then, there exists a global weak solution $f_t$ to \eqref{limit-equation} with initial condition $f_0$,
	that satisfies:
	\begin{enumerate}[i)]
	\item Conservation of $L^p$ norms for any $p \in [1,\infty]$: $ \forall \, t \ge 0, \; \|f_t\|_p =\|f_0\|_p$,
	\item Propagation of $L^\infty_\gamma$ norms: $ \displaystyle 
			\|f_t\|_{\gamma} \leq \left(1+c \|f_0\|_\infty^{\frac14} t \right)^{2\gamma} \|f_0\|_{\gamma}, \quad t \geq 0$, with a numerical constant $c$ defined in Proposition~\ref{bounded-fields}.
	\end{enumerate}
\end{theorem}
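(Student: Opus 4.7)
The plan is a mollify-and-pass-to-the-limit scheme: fix $f_0 \in \proba_1(\RR^4) \cap L^\infty_\gamma(\RR^4)$, and for $\eta > 0$ let $\rho_\eta$ be a standard smooth mollifier on $\RR^4$. Set $J_\eta = J * \rho_\eta$ and define the regularized fields $\vit_\eta[g] = J_\eta * g$, $\acc_\eta[g] = (J_\eta \circ S)*g$; these remain rotated gradients of the mollified potential $K*\rho_\eta$, hence divergence free in $(x,v)$. Since $J_\eta$ is smooth and bounded, a standard Picard iteration on the flow map in $C([0,T];(\proba_1,\wass_1))$ delivers a unique global solution $f^\eta_t = \Phi^\eta_t \# f_0$ to the regularized equation, where $\Phi^\eta_t$ is the associated volume-preserving flow; consequently $\|f^\eta_t\|_p = \|f_0\|_p$ for every $p \in [1,\infty]$.

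The crucial $\eta$-uniform estimate is the propagation of $\|\cdot\|_\gamma$. Writing $J_\eta * f = J * (f * \rho_\eta)$ and using $\|f*\rho_\eta\|_p \le \|f\|_p$, Proposition~\ref{bounded-fields} yields $\|\vit_\eta[f^\eta_t]\|_\infty, \|\acc_\eta[f^\eta_t]\|_\infty \le c \|f_0\|_\infty^{1/4}$ uniformly in $\eta$ and $t$. Along characteristics $(X^\eta_t, V^\eta_t) = \Phi^\eta_t(x,v)$ this gives $1+|X^\eta_t| \le (1+c\|f_0\|_\infty^{1/4} t)(1+|x|)$ and analogously for $V^\eta_t$. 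Combining with $f^\eta_t \circ \Phi^\eta_t = f_0$ yields
\[
\|f^\eta_t\|_\gamma \le (1+c\|f_0\|_\infty^{1/4} t)^{2\gamma} \|f_0\|_\gamma,
\]
uniformly in $\eta$. A first-moment bound on $f^\eta_t$ is obtained analogously.

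The proof of Theorem~\ref{theorem-stability} now extends to a pair $(f^\eta, f^{\eta'})$ of regularized solutions launched from the same $f_0$: the only new terms involve $J_\eta - J_{\eta'}$ acting against $f^\eta_t$ or $f^{\eta'}_t$, and they can be controlled by the uniform $\|\cdot\|_\gamma$ bound together with standard mollifier estimates, producing an error that vanishes as $\eta, \eta' \to 0$. A Grönwall argument then yields $\wass_1(f^\eta_t, f^{\eta'}_t) \to 0$ uniformly on compact time intervals, so $f^\eta_t \to f_t$ in $\wass_1$ for some $f_t \in \proba_1(\RR^4)$. Lower semi-continuity transmits the $\|\cdot\|_\gamma$ bound and the upper bound $\|f_t\|_p \le \|f_0\|_p$ to the limit; the matching lower $L^p$ bounds follow from the fact that each $f^\eta$ is a pushforward by a volume-preserving flow, which prevents concentration or escape of mass.

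It remains to check that $f$ solves \eqref{limit-equation} in the sense of Definition~\ref{def:sol}. In the weak formulation for $f^\eta$, the term involving $\partial_t \psi$ passes to the limit by $\wass_1$-convergence. The nonlinear term is treated by writing
\[
f^\eta_t\vit_\eta[f^\eta_t] - f_t\vit[f_t] = f^\eta_t(\vit_\eta - \vit)[f^\eta_t] + \bigl(f^\eta_t\vit[f^\eta_t] - f_t \vit[f_t]\bigr),
\]
tested against $\nabla_x \psi$: the first piece vanishes thanks to the uniform $\|\cdot\|_\gamma$ bound, which supplies uniform integrability across the singularity of $J$, together with the pointwise convergence $J_\eta \to J$; the second is handled by the same estimates as in the proof of Theorem~\ref{theorem-stability} and goes to zero because $\wass_1(f^\eta_t, f_t) \to 0$ while $\|f^\eta_t\|_\gamma$ stays bounded. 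The main obstacle is precisely the extended stability estimate used in the third step: one must quantify, in a weighted norm compatible with the bounds on the solutions, how the Wasserstein error produced by two different regularized kernels depends on a suitable norm of $J_\eta - J_{\eta'}$, and show this error vanishes as $\eta \to 0$.
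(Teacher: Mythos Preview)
Your outline is correct and follows the paper's route: mollify $J$, solve the regularized equation from the same $f_0$, obtain $\eta$-uniform $\|\cdot\|_\gamma$ bounds via Proposition~\ref{bounded-fields}, prove a Cauchy estimate in $\wass_1$ between regularized solutions, and pass to the limit.

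Two places where the paper's execution is sharper than your sketch are worth flagging. First, for the ``main obstacle'' you name, the paper does not estimate $J_\eta-J_{\eta'}$ directly. Instead it uses $\vit_\eta[g]=J\ast(g\ast\rho_\eta)=\vit[g\ast\rho_\eta]$ and applies Proposition~\ref{prop:key_estim} with $f=f^\eta_t$, $g=f^{\eta'}_t$, $\tilde f=f^\eta_t\ast\rho_\eta$, $\tilde g=f^{\eta'}_t\ast\rho_{\eta'}$, and an optimal transport $\tilde\transpt_t$ between the mollified densities; the residual $\eta+\eta'$ comes cleanly from $\wass_1(g,g\ast\rho_\eta)\le 2\eta$ and $\|g\ast\rho_\eta\|_\gamma\le(1+\eta)^{2\gamma}\|g\|_\gamma$. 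Second, exact $L^p$ conservation for the limit does not follow from semicontinuity alone: the paper upgrades the Cauchy estimate to a distance on the flows $\int f_0(\dd z)\sup_{s\le t}|\Phi^\eta_s(z)-\Phi^{\eta'}_s(z)|$, obtains a limit flow $Z_t$, shows (again via Proposition~\ref{prop:key_estim}) that $Z_t$ solves the limit ODE, and concludes $f_t=Z_t\#f_0$ with $Z_t$ volume-preserving, which gives $\|f_t\|_p=\|f_0\|_p$.
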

Both Theorems implies that the Cauchy Problem for~\eqref{limit-equation} is well posed for initial conditions in $\proba_1 \cap L^\infty_\gamma$ ($\gamma >2$), in the class  $L^1_{\mathrm{loc}}(\RR^+,(\proba_1 \cap L^\infty_\gamma)(\RR^4))$. 

\paragraph{Difference with the classical similar results for the original Vlasov-Poisson system}

The general idea is that the fast gyration of particles provides a kind of regularization of the interaction potential. It can be seen in the fact that the force kernel $J$ defined in~\eqref{eq:def_J} contains an extra indicator function w.r.t. the classical Poisson force kernel in dimension two :
\[
	%E[f(t)](x) = \int_{\RR^4} f(t,y,w) 
	J_P(x-y)  =  \frac{x-y}{2 \pi |x-y|^2}. 
	%\, \dd y \dd w,
\]
The classical kernel depends only on the position. The new dependence on $x$ and $v$ complicates a bit the structure of the vector fields but it also simplifies the problem raised by the singularity of the Poisson potential at $x=0$ (and $v$ arbitrary).
In fact $J$ has a point singularity at $x=0,v=0$ in the phase space $\RR^4$, while $J_P$ has the whole plane $\{x=0\}$ of singular points in $\RR^4$.

For that reason, it is quite natural to expect that the control of the singularity will be easier and that we should get simpler estimates than in the Vlasov-Poisson case. This is exactly what will happen. 
%Our equation is very close to the Vlasov-Poisson system. In the two-dimensional Vlasov-Poisson system, the electric field $E[f]$ is given by which is $\vit[f]$ without the indicator function.
%It will be shown later that the field $(\vit[f],\acc[f])$ has actually better properties than $(v,E[f])$. It is easier to get bounds on the former because the indicator function somehow \emph{screens} the singularity.
%A part of the literature is devoted to determining sufficient conditions for uniqueness in the Vlasov-Poisson system.
Let us recall briefly the classical well-posedness result for the original 2D Vlasov-Poisson system 
\[
\partial_t f + v \cdot \nabla_x f + (J_P \conv \rho_f) \cdot \nabla_v f = 0.
\]
In~\autocite{LOE2006}, \citeauthor{LOE2006} proved 
a stability result for weak solutions such that the spatial density belongs to $L^{\infty}_{\mathrm{loc}}(\RR^+, L^\infty(\RR^3))$ which implies uniqueness in that class\footnote{The uniqueness part of that result was extended in~\autocite{MIO2016} for weak solutions such that the $L^p$ norms of the density grow at most linearly with respect to $p$.
%In this case, the electric field is no longer log-Lipschitz continuous but only Hölder continuous.
Uniqueness is also established in~\autocite{HOL2018} when the density belongs to a certain class of Orlicz spaces.}. Roughly his estimate  between two solutions $f$ and $g$ looks like the following
\[
\frac{d}{dt} W_1(f_t,g_t) \le 
C \bigl( \| \rho_{f_t}\|_\infty + \| \rho_{g_t}\|_\infty \bigr) 
W_1(f_t,g_t) \left( 1+ \ln^- W_1(f_t,g_t) \right),
\]
where $\ln^-$ denotes the negative part of $\ln$:  $\ln^-(x) = - [(\ln x) \wedge 0]$.  $\rho_f$ denotes the spatial density associated to $f$: $\rho_f = \int f \, \dd v$.

Then, an issue is to construct solutions to the Vlasov-Poisson system whose spatial density is bounded. This is not so easy. An approach based on velocity moments of the distribution function was initiated by~\citeauthor{LIO1991} in~\autocite{LIO1991} (see also later~\autocite{SAL2009,PAL2012}). An approach based originally on the preservation of the support of compactly supported function was initiated by~\citeauthor{Pfaff} in~\autocite{Pfaff}, and later extended to functions vanishing uniformly sufficiently fast in velocity at infinity  by~\citeauthor{Horst} in~\autocite{Horst}.

In our stability estimate~\eqref{stability-estimate}for our new model~\eqref{limit-equation}, we see several simplifications with respect to the VP case:
\begin{itemize}
\item Our stability estimate is linear rather than log-linear. This is a consequence of the screening.
\item It holds  on the condition that $\| f\|_\gamma$ and $\|g\|_\gamma$ are finite only. Such norms are much simpler to propagate than the infinite norms of spatial densities requested in the VP case. This simplification is also related to the screening, which allows to integrate on the velocity variable more easily since the interaction kernel vanishes at large velocities. 
\end{itemize}

\paragraph{Plan of the article}
%In section~\ref{scaling-of-the-model}, we introduce the scaling for the finite Larmor radius regime.
Section~\ref{preliminaries} is dedicated to some preliminary results about equation~\eqref{limit-equation}.
The proof of the stability estimate~\eqref{stability-estimate} of Theorem~\ref{theorem-stability} is to be found in section~\ref{stability-wasserstein}. Finally, the existence of weak solutions (Theorem~\ref{theorem-existence}) is proved in section~\ref{existence-of-weak-solutions}.

\section{Preliminaries}\label{preliminaries}

In this section, we state some useful lemmas. Then we state and prove some a priori estimates on solutions to Equation~\eqref{limit-equation}.
At the end, we recall some facts about the Wasserstein distance of order $1$ that is used in the proof of Theorem~\ref{theorem-existence} and~\ref{theorem-stability}.

\subsection{Useful geometric lemmas}

We will state here four useful lemmas. The two first are about the way to control the increments of $J$.
The two last provide bound for some specific integrals, that will appears many time later in the proof. % Most of the later proofs will in fact rely on that quite simple arguments.

\begin{lemma}\label{lem:ineq_R2}
    Let $x,x_* \in \RR^2\setminus \{0\}$. Then
\[
	\left | \frac{\pp{x}}{|x|^2}-\frac{\pp{x_*}}{|x_*|^2} \right | \leq  
	\left(\frac{1}{|x|^2}+\frac{1}{|x_*|^2} \right ) \frac{|x-x_*|}2.
\]
\end{lemma}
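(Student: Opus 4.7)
The plan is to reduce the inequality to a textbook elementary inequality (AM--GM) by first computing the left-hand side exactly. The key observation is that the rotation by $\frac{\pi}{2}$ preserves the inner product: $\pp{x} \cdot \pp{x_*} = x \cdot x_*$ and $|\pp{x}| = |x|$.

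Expanding the squared norm gives
\[
\left|\frac{\pp{x}}{|x|^2} - \frac{\pp{x_*}}{|x_*|^2}\right|^2
= \frac{1}{|x|^2} + \frac{1}{|x_*|^2} - 2\, \frac{\pp{x} \cdot \pp{x_*}}{|x|^2 |x_*|^2}
= \frac{|x_*|^2 + |x|^2 - 2\, x \cdot x_*}{|x|^2 |x_*|^2}
= \frac{|x - x_*|^2}{|x|^2 |x_*|^2}.
\]
Taking square roots yields the clean identity
\[
\left|\frac{\pp{x}}{|x|^2} - \frac{\pp{x_*}}{|x_*|^2}\right| = \frac{|x - x_*|}{|x|\,|x_*|}.
\]

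For the second step, I would apply the AM--GM inequality to $\frac{1}{|x|^2}$ and $\frac{1}{|x_*|^2}$:
\[
\frac{1}{|x|\,|x_*|} = \sqrt{\frac{1}{|x|^2} \cdot \frac{1}{|x_*|^2}} \leq \frac{1}{2}\left(\frac{1}{|x|^2} + \frac{1}{|x_*|^2}\right).
\]
Combining this with the previous identity immediately gives the claimed estimate.

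There is essentially no main obstacle here; the only point that requires attention is recognizing that the rotational identity $\pp{x} \cdot \pp{x_*} = x \cdot x_*$ turns the expansion into a perfect square, so that step one is an equality rather than an inequality. This is what makes the bound sharp (with equality attained when $x_* = -x$, as a check against $\frac{2}{|x|} = \frac{1}{2} \cdot \frac{2}{|x|^2} \cdot 2|x|$).
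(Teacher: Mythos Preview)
Your proof is correct and follows essentially the same route as the paper: both establish the exact identity $\left|\frac{\pp{x}}{|x|^2} - \frac{\pp{x_*}}{|x_*|^2}\right| = \frac{|x-x_*|}{|x|\,|x_*|}$ and then conclude with AM--GM. The only cosmetic difference is that the paper reaches the identity via a symmetry observation (reflection across the bisector of $x$ and $x_*$), whereas you expand the squared norm directly; your computation is arguably the more transparent of the two.
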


\begin{proof}
  Indeed, notice that
\[
	\left | \frac{|x_*|\pp{x}}{|x|}-\frac{|x|\pp{x_*}}{|x_*|} \right | = 
	\left | \frac{|x_*| x}{|x|}-\frac{|x|x_*}{|x_*|} \right |	=|x-x_*|.
\]
This could be shown expanding the square of the two scalar products, or by remarking that the vectors $\frac{|x_*|x}{|x|}$ and $\frac{|x|x_*}{|x_*|}$ are the images of $x$ and $x_*$ by the symmetry with respect to the bisector of the angle between $x$ and $x_*$.
Dividing by $|x||x_*|$ we get 
\[
	\left | \frac{\pp{x}}{|x|^2}-\frac{\pp{x_*}}{|x_*|^2} \right | = 
	\left | \frac{ x}{|x|^2}-\frac{x_*}{|x_*|^2} \right |	=\frac{|x-x_*|}{|x_*||x|}
\]
Then, using for all $a,b>0$
\[
	\frac{1}{ab} = \frac{ab}{a^2 b^2} \leq \frac{1}{2} \frac{a^2+b^2}{a^2 b^2} = \frac{1}{2} \left (\frac{1}{a^2}+\frac{1}{b^2} \right ),
\]
we obtain the claimed result.
\end{proof}

The previous lemma allows to control the variation of $J$ in a convenient way. Precisely
\begin{lemma} \label{lem:var_J}
For any $x,v,x_*,v_* \in \RR^2 \setminus \{0\}$, denoting $ \delta := |x-x_\ast| + |v-v_\ast|$, 
\begin{multline*}
\bigl| J(x,v) - J(x_*,v_*) \bigr| \le
\left(\frac{\one_{|v| \le |x| }}{|x|^2} + \frac{\one_{|v_*| \le |x_*| }}{|x_*|^2}  \right) \frac{|x-x_*|}{4 \pi}  \\ +
\frac1{2 \pi |x|} \one_{|v| \le |x| \le |v| +  \delta} +
\frac1{2 \pi |x_*|} \one_{|v_*| \le |x_*| \le |v_*| +  \delta}
\end{multline*}
\end{lemma}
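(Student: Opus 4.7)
The plan is a straightforward case analysis depending on whether the two indicators $\one_{|x| \ge |v|}$ and $\one_{|x_*| \ge |v_*|}$ appearing in the definition of $J$ are on or off. There is no clever identity hiding; the only mildly non-routine ingredient is a triangle-inequality chain in the asymmetric case. The three cases would be:

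\textbf{Case A: both $|x| \ge |v|$ and $|x_*| \ge |v_*|$.} Here
\[
J(x,v) - J(x_*, v_*) = \frac{1}{2\pi}\left(\frac{\pp{x}}{|x|^2} - \frac{\pp{x_*}}{|x_*|^2}\right),
\]
and Lemma~\ref{lem:ineq_R2} gives exactly
$|J(x,v)-J(x_*,v_*)| \le \frac{|x-x_*|}{4\pi}\bigl(|x|^{-2}+|x_*|^{-2}\bigr)$,
which matches the first summand of the RHS, both characteristic functions being equal to $1$. The remaining two (nonnegative) indicator terms on the RHS only make the bound weaker, so we are done.

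\textbf{Case B: both $|x| < |v|$ and $|x_*| < |v_*|$.} Then $J(x,v)=J(x_*,v_*)=0$ and the inequality is trivial.

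\textbf{Case C: exactly one indicator is on, say $|v| \le |x|$ and $|v_*|>|x_*|$ (the other subcase is symmetric).} Then $J(x_*, v_*) = 0$, hence
\[
|J(x,v) - J(x_*,v_*)| = |J(x,v)| = \frac{1}{2\pi |x|}.
\]
The key point is to show that this quantity is exactly what the second indicator term of the RHS provides, i.e.\ that $|x| \le |v| + \delta$. For this I would chain
\[
|v| \ge |v_*| - |v-v_*| > |x_*| - |v-v_*| \ge |x| - |x-x_*| - |v-v_*| = |x| - \delta,
\]
where I used successively the reverse triangle inequality, the case hypothesis $|v_*|>|x_*|$, and once more the reverse triangle inequality. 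Together with $|v| \le |x|$, this yields $|v| \le |x| \le |v|+\delta$, so $\one_{|v| \le |x| \le |v|+\delta} = 1$, and the RHS dominates $\frac{1}{2\pi|x|}$ as required. The symmetric subcase $|v_*| \le |x_*|$, $|v|>|x|$ is identical with the roles of the starred and unstarred variables swapped, and produces the third summand on the RHS.

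The main obstacle, if any, is just bookkeeping: making sure that the indicator thresholds in the statement exactly accommodate the asymmetric case, which is why the $\delta$-thickened annulus $\{|v| \le |x| \le |v|+\delta\}$ appears rather than the simpler $\{|v| \le |x|\}$. Once the triangle-inequality chain above is written down, the proof is immediate.
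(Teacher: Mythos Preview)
Your proof is correct and follows essentially the same approach as the paper: a case analysis on the indicators, with Lemma~\ref{lem:ineq_R2} handling the case where both are on, and a triangle-inequality chain (the paper calls it a ``rope argument'') showing $|x|\le |v|+\delta$ in the asymmetric case. The only cosmetic difference is that the paper packages the case split as an identity $2\pi|J-J_*| = I_1 + I_2 + I_2^*$ with indicator-weighted terms, whereas you treat the three cases separately.
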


Before the proof of that lemma, we try to explain why that bound allows to get simpler and stronger estimates than in the Vlasov-Poisson case. The first term in the r.h.s is similar to what you can get in the Vlasov-Poisson system, with a major difference : the presence of the indicator function. It has two vantages: 
\begin{itemize}
    \item The singularity is screened when $x$ (or $x_*$) approaches $0$,
    \item The bound vanishes for large $v$, so that the control of the spatial density $\rho_f$ will not be required to control this terms.
\end{itemize}
The presence of that indicator function has however a drawback: the presence of the second and third terms. So an important novelty w.r.t. the VP case is that we will need to control new terms linked to the indicator function. But these terms are not too singular because:
\begin{itemize}
    \item The singularity is lower (integrable near $0$) and still screened,
    \item they vanish outside a gear, which is quite thin when $(x,v)$ and $(x_*,v_*)$ are close together.
\end{itemize}

\begin{proof}
By the definition~\eqref{eq:def_J}
\begin{align*}
   2 \pi  \bigl| J(x,v) - J(x_*,v_*) \bigr| &  = \left | \one_{|v| \leq |x|}\frac{\pp{x}}{|x|^2}- \one_{|v_*| \leq |x_*|}\frac{\pp{x_*}}{|x_*|^2} \right |  \\
   & = \one_{|v| \leq |x|}  \one_{|v_*| \leq |x_*|} \left | \frac{\pp{x}}{|x|^2}-\frac{\pp{x_*}}{|x_*|^2} \right |  +
    \frac{\one_{|v| \leq |x|} \one_{|v_*| \geq |x_*|}}{|x|} +
    \frac{\one_{|v| \geq |x|} \one_{|v_*| \leq |x_*|}}{|x_*|} \\
    & = I_1 + I_2 + I_2^\ast.
\end{align*}
The term $I_1$ is bounded by the help of Lemma~\ref{lem:ineq_R2}.
\begin{align*}
I_1 & \leq \one_{|v| \leq |x|}  \one_{|v_*| \leq |x_*|} \left(\frac{1}{|x|^2}+\frac{1}{|x_*|^2} \right ) \frac{|x-x_*|}2  \\ 
& \leq  \left(\frac{\one_{|v| \leq |x|}}{|x|^2}+\frac{\one_{|v_*| \leq |x_*|}}{|x_*|^2} \right ) \frac{|x-x_*|}2.
\end{align*}

The $I_2$ term is bounded using a geometric argument, called ```rope argument'' in~\cite{HAU2012}. In fact, it is non zero only if $|x| - |v|$ has a positive sign and $|x_*|-|v_*|$ a negative one. This happens only if $\delta  = |x-x_*| + |v-v_*|$ is not too small. Precisely, in that case
\[
|x|- |v| \le   (|x|-|v|) - ( |x_*| - |v_*| ) = (|x| -|x_*|) - (|v| - |v_*|) 
\le  |x-x_*| + |v-v_*| = \delta 
\]
by the triangular inequality. It implies that
\[
I_2 \le \frac1{|x|} \one_{|v| \le |x| \le |v| +  \delta}.
\]
The term $I_2^\ast$ is bounded in the same way that $I_2$, exchanging the role of $(x,v)$ and $(x_*,v_*)$ :
\[
I_2^\ast \le \frac1{|x_*|} \one_{|v_*| \le |x_*| \le |v_*| +  \delta}.
\]
Summing up the above bounds on $I_1,I_2,I_2^\ast$, we obtain the claimed inequality.
\end{proof}

We now state two very useful lemmas, about the possibility to bound some specific integral with the help of the $L^\infty_\gamma$ norm. They will  allow us to bound integrals of the terms appearing in the r.h.s. of the bound of Lemma~\ref{lem:var_J}: Lemma~\ref{lem:int_bound_1} will be useful for the first term, and Lemma~\ref{lem:int_bound_2}  for the second and third terms.

\begin{lemma}
\label{lem:int_bound_1}
Let $g$ be a function in $L^1 \cap L^\infty(\RR^2)$, $x,v \in \RR^2$. Then,
\[
\int_{|w-v|<|y-x|} \frac{g(\dd y,\dd w)}{2 \pi |y-x|^2} \le  \bigl( \|g\|_1 \|g \|_\infty \bigr)^{\frac12} \le  \kappa_\gamma  \|g\|_\gamma,
%\qquad \text{where } \kappa_\gamma = \pi\int_{\RR^2} \frac{\dd y}{(1+|y|)^\gamma}.
\]
the last bound being valid for $\gamma >2$, and if $\|g\|_\gamma < \infty$. $\kappa_\gamma$ stands for the for the constant introduced in~\eqref{eq:prop_norm_gamma}.
\end{lemma}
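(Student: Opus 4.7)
The plan is to prove the first inequality by a standard $L^1$--$L^\infty$ interpolation, splitting the integration domain according to whether $|y-x|$ is smaller or larger than a cutoff $R>0$ to be optimized. The second inequality is then immediate from the elementary bounds recalled in~\eqref{eq:prop_norm_gamma}.

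For the first inequality, fix $R>0$ and write the integral as $A(R) + B(R)$ where $A(R)$ is over the region $\{|w-v| < |y-x|, \ |y-x| < R\}$ and $B(R)$ over $\{|w-v| < |y-x|, \ |y-x| \geq R\}$. On the inner region I bound $g$ by $\|g\|_\infty$; the key observation is that for fixed $y$ the set of admissible $w$ is the disk $B(v,|y-x|)$ of area $\pi|y-x|^2$, which cancels precisely against the factor $|y-x|^{-2}$. This yields
\[
A(R) \leq \|g\|_\infty \int_{|y-x|<R} \frac{\pi |y-x|^2}{2\pi |y-x|^2}\, \dd y = \frac{\pi R^2}{2}\|g\|_\infty.
\]
On the outer region I bound $|y-x|^{-2} \leq R^{-2}$ and drop the constraints to obtain
\[
B(R) \leq \frac{1}{2\pi R^2} \int_{\RR^4} g(\dd y,\dd w) = \frac{\|g\|_1}{2\pi R^2}.
\]

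Adding the two and optimizing: the choice $R^2 = \pi^{-1}\sqrt{\|g\|_1/\|g\|_\infty}$ makes both contributions equal to $\tfrac{1}{2}\sqrt{\|g\|_1 \|g\|_\infty}$, giving the desired bound $\sqrt{\|g\|_1\|g\|_\infty}$. For the second inequality, the estimates~\eqref{eq:prop_norm_gamma} give $\|g\|_\infty \leq \|g\|_\gamma$ and $\|g\|_1 \leq \kappa_\gamma^2 \|g\|_\gamma$ as soon as $\gamma>2$, so taking the square root yields $\sqrt{\|g\|_1\|g\|_\infty} \leq \kappa_\gamma \|g\|_\gamma$.

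There is no real obstacle here: the only small point worth noting is the fortunate cancellation between the area of the ball in the $w$--variable and the singular weight $|y-x|^{-2}$, which is what makes the $L^\infty$ piece integrable without any additional truncation in $w$; this is precisely the manifestation of the screening effect discussed after Lemma~\ref{lem:var_J}.
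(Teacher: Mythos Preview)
Your proof is correct and follows essentially the same approach as the paper: split at a radius $R$, use the $L^\infty$ bound on the inner region where the $w$-integral produces the factor $\pi|y-x|^2$ that cancels the singularity, use the $L^1$ bound on the outer region, then optimize in $R$. The only cosmetic difference is that you carry the $1/(2\pi)$ factor throughout while the paper computes without it and divides at the end; the optimal $R$ and the final bound are identical.
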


\begin{proof}
% Using the definition~\ref{deft-norm-gamma} of the $L^\infty_\gamma$-norm 
Picking up an $R>0$ and using Fubini Theorem, we can simply bound
\begin{align*}
	\int_{|v-w| < |x-y| <R }  \frac{g(\dd y, \dd w)}{|x-y|^2} &\leq 
	\|g\|_\infty \int_{|x-y| <R}  \frac{\dd y}{|x-y|^2} \underbrace{\int_{|x-y|>|v-w|} \dd w}_{=\pi|x-y|^2} \\
	&= \pi \|g\|_\infty \int_{|x-y| <R} \dd y = \pi^2 \|g\|_\infty R^2.
\end{align*}
And also easily
\[
\int_{|x-y| \ge R }  \frac{g(\dd y, \dd w)}{|x-y|^2} \le \frac{\|g\|_1}{R^2}.
\]
Summing these two inequalities, we obtain
\[
\int_{|v-w| < |x-y|  }  \frac{g(\dd y, \dd w)}{|x-y|^2} \le \pi^2 \|g\|_\infty R^2 +\frac{\|g\|_1}{R^2},
\]
an inequality valid for any $R>0$. It turns out that the optimal choice for $R$ is $R_m^2 = \sqrt{\frac{\|g\|_1}{\pi^2 \|g\|_\infty}}$, in which case the bound is equal to $2 \pi \bigl( \|g\|_1 \|g \|_\infty \bigr)^{\frac12}$. A quantity smaller than $ 2 \pi \kappa_\gamma  \|g\|_\gamma$ thanks to~\eqref{eq:prop_norm_gamma}. Dividing by $2\pi$, the conclusion follows.
\end{proof}

%The second Lemma allows to control the integral on a kind of gear

\begin{lemma}
\label{lem:int_bound_2}
Let $\gamma >2$, $g$ be a probability distribution in $L^\infty_\gamma$, $x,v \in \RR^2$ and  $\delta >0$. Then, 
\[
\int_{ |v-w| \le |x-y|  < |v-w| + \delta} \frac{g(\dd y,\dd w)}{2 \pi |y-x|} \le \kappa_\gamma \|g\|_\gamma \delta.
%\qquad \text{where } \kappa_\gamma = \pi\int_{\RR^2} \frac{\dd y}{(1+|y|)^\gamma}.
\]
\end{lemma}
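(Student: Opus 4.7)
The plan is to apply Fubini and carry out the $y$-integration first after using only the $w$-decay from $\|g\|_\gamma$; the point is that the singular factor $1/|x-y|$ combines perfectly with the thin-annulus indicator through polar coordinates.

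First I would pointwise bound the density using the definition of $\|\cdot\|_\gamma$. Since $(1+|y|)^\gamma \ge 1$, one has
\[
    g(y,w) \leq \frac{\|g\|_\gamma}{(1+|w|)^\gamma},
\]
so the integral to bound is at most
\[
    \frac{\|g\|_\gamma}{2\pi} \int_{\RR^2} \frac{1}{(1+|w|)^\gamma} \left( \int_{\RR^2} \frac{\one_{|v-w| \le |x-y| < |v-w|+\delta}}{|x-y|} \, \dd y \right) \dd w.
\]

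Second, for each fixed $w$, I would compute the inner $y$-integral by switching to polar coordinates centered at $x$, namely $y = x + r e^{i\theta}$, with $\dd y = r \, \dd r \, \dd\theta$. The indicator constrains $r$ to the interval $[|v-w|, |v-w| + \delta]$, and the Jacobian $r$ cancels the singularity $1/|x-y| = 1/r$ exactly. Thus
\[
    \int_{\RR^2} \frac{\one_{|v-w| \le |x-y| < |v-w|+\delta}}{|x-y|} \, \dd y
    = \int_0^{2\pi} \int_{|v-w|}^{|v-w|+\delta} \dd r \, \dd\theta = 2\pi\, \delta,
\]
independently of $x,v,w$. This is the only real computation and it is where the special structure of the annulus pays off: there is no need for the case split between $|x-y| \le \delta$ and $|x-y| > \delta$.

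Third, I would integrate the remaining weight in $w$, recognizing the definition of $\kappa_\gamma$ from~\eqref{eq:prop_norm_gamma}:
\[
    \int_{\RR^2} \frac{\dd w}{(1+|w|)^\gamma} = \kappa_\gamma.
\]
Combining the three steps yields the announced bound $\|g\|_\gamma \kappa_\gamma \delta$. There is no serious obstacle; the only thing to be careful about is to use the $w$-decay (not the $y$-decay) when discarding the $L^\infty_\gamma$ weight, so that the clean $y$-integration in polar coordinates is available.
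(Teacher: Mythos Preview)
Your proof is correct and follows essentially the same route as the paper's: bound $g(y,w)\le \|g\|_\gamma/(1+|w|)^\gamma$, apply Fubini, evaluate the inner $y$-integral in polar coordinates centered at $x$ (the Jacobian cancels the $1/|x-y|$, leaving $2\pi\delta$), and finish with the definition of $\kappa_\gamma$.
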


\begin{proof}
\begin{align*}
\int_{|v-w| \le |x-y|  < |v-w| + \delta} \frac{g(\dd y,\dd w)}{|y-x|}  
& \leq 
\|g\|_{\gamma} \int_{\RR^2} \frac{\dd w}{(1+|w|)^{\gamma}}  \int_{|v-w| < |y-x|<|v-w|+\delta} \frac{\dd y}{|x-y|} \\
&  = 
\|g\|_{\gamma} \int_{\RR^2} \frac{\dd w}{(1+|w|)^{\gamma}}  \int_{|v-w|}^{|v-w|+\delta} 2 \pi \dd r \\
& \leq  2 \pi \delta \|g\|_{\gamma} \int_{\RR^2} \frac{\dd w}{(1+|w|)^{\gamma}} = 2 \pi \kappa_\gamma \|g\|_{\gamma} \delta.
\end{align*}
The integral in $y$ on the first line is in fact computed after a radial change of variable. Dividing by $2 \pi$, the conclusion follows.
\end{proof}

\subsection{A priori bound and Lipschitz estimates on the advection fields}

Our first important estimate is an infinite bound on the advection fields. In the 3D Vlasov-Poisson system, if we denote by $E_f$ the electric field induced by the distribution $f$, we have an estimate of the form
for any $m>3$
\[
	\|E_f \|_{\infty} \leq C_m \|\rho_f\|_m \le  C_m' \left(\|\rho_f\|_1+\|\rho_f\|_{\infty} \right),
\]
where $\rho = \int f \, \dd v$ is the density. 
%To prove uniqueness, it is useful to have $E$ bounded, and thus, it is known that $\rho \in L^{\infty}$ is a sufficient condition for uniqueness (see~\autocite{LOE2006}). 
But propagating uniform bound on the density is not so easy. This was however performed by~\citeauthor{LIO1991} in~\autocite{LIO1991} and by \citeauthor{Pfaff} in~\autocite{Pfaff} with very different technics.

%A sufficient condition for the density to be bounded is given in~\autocite{LIO1991}. 
In our case, it is actually possible to get a simpler and better estimate on $\|\vit[f]\|_{\infty}$ and $\|\acc[f]\|_{\infty}$ by taking advantage of the indicator function present in the definition of the fields in~\eqref{vit-expr} and~\eqref{acc-expr}. That bound is much easier to satisfy because of the propagation of the $L^p$ norms.  This is what the following proposition is about. This is an important estimate in our work. It allows to control the advection fields with bound on $\|f\|_1$ and $\|f\|_\infty$ only, which are quite simple to propagate by Proposition~\ref{prop-cons-norm} below.

\begin{prop}\label{bounded-fields}
For any $f \in L^1(\RR^4)\cap L^{\infty}(\RR^4)$, the following estimates holds :
\[
	\left\|\vit[f] \right\|_{\infty}, \left\|\acc[f] \right\|_{\infty}  \leq c \,
	\|f\|_{\infty}^{\frac 14} \|f\|_1^{\frac 34} \leq c\, \kappa_\gamma^{\frac 32} \|f\|_\gamma, 
	\quad \text {with } \quad c:= \frac{2^{\frac 54}}{3 \sqrt \pi}.
\]
\end{prop}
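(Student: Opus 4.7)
The plan is to prove the bound for $\vit[f]$ by a direct interpolation argument that exploits the screening indicator in $J$; the bound for $\acc[f]$ then follows by the symmetry between the two expressions~\eqref{vit-expr}--\eqref{acc-expr} (swap $(x,v)$ with $(v,w)$ in the integrand).

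Starting from~\eqref{vit-expr}, I would pick an arbitrary $R>0$ and write
\[
2\pi |\vit[f](x,v)| \le \int_{\{|v-w|\le |x-y|\le R\}} \frac{f(y,w)}{|x-y|}\,\dd y\dd w + \int_{\{|x-y|>R\}} \frac{f(y,w)}{|x-y|}\,\dd y\dd w.
\]
For the second integral, the bound $|x-y|^{-1} < R^{-1}$ gives immediately $\|f\|_1/R$. For the first, use $\|f\|_\infty$ and Fubini to integrate first in $w$ on the disk $\{|v-w|\le |x-y|\}$ of area $\pi|x-y|^2$, and then in $y$ on $\{|x-y|\le R\}$:
\[
\int_{\{|v-w|\le|x-y|\le R\}} \frac{\dd y\dd w}{|x-y|}
= \pi \int_{|x-y|\le R} |x-y|\,\dd y
= \pi \int_0^R r\cdot 2\pi r\,\dd r
= \frac{2\pi^2 R^3}{3}.
\]
Summing the two estimates,
\[
2\pi|\vit[f](x,v)| \le \frac{2\pi^2 R^3}{3}\|f\|_\infty + \frac{\|f\|_1}{R}.
\]

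The remaining step is routine optimization in $R$. Setting the derivative to zero gives the optimal choice $R_\ast^4 = \|f\|_1/(2\pi^2\|f\|_\infty)$, and substituting back (the two terms become $1/3$ and $1$ of $\|f\|_1/(2\pi R_\ast)$, summing to $4/3$ of it) yields
\[
\|\vit[f]\|_\infty \le \frac{1}{2\pi}\cdot \frac{4}{3}\cdot\frac{\|f\|_1}{2\pi R_\ast}
= \frac{(2\pi^2)^{1/4}}{3\pi^2}\cdot 2\pi\cdot\|f\|_\infty^{1/4}\|f\|_1^{3/4}
= \frac{2^{5/4}}{3\sqrt{\pi}}\|f\|_\infty^{1/4}\|f\|_1^{3/4},
\]
which is exactly the claimed constant $c$. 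The second inequality of the proposition is then obtained by applying the two bounds in~\eqref{eq:prop_norm_gamma}: $\|f\|_\infty^{1/4}\le\|f\|_\gamma^{1/4}$ and $\|f\|_1^{3/4}\le\kappa_\gamma^{3/2}\|f\|_\gamma^{3/4}$.

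There is no real obstacle here; the only thing to be careful about is the bookkeeping of the $2\pi$ factors (from $J$ and from the polar integration) and the fact that the weight $|x-y|^{-1}$ in place of $|x-y|^{-2}$ in Lemma~\ref{lem:int_bound_1} shifts the optimal exponents from $(1/2,1/2)$ to $(1/4,3/4)$; this is precisely where the indicator $\one_{|v-w|\le|x-y|}$ pays off, since without it the inner $w$-integral would diverge and one would be forced to invoke a norm on $\rho_f$ as in the Vlasov--Poisson case.
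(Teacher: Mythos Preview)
Your proof is correct and follows essentially the same argument as the paper: split the convolution integral at a radius $R$, bound the near part by $\|f\|_\infty$ after integrating $w$ over the disk of area $\pi|x-y|^2$, bound the far part by $\|f\|_1/R$, and optimize in $R$ to obtain the interpolation constant $c=2^{5/4}/(3\sqrt\pi)$. The only blemish is a typographical slip in your displayed chain (an extra $\tfrac{1}{2\pi}$ in the first equality compensated by an extra $2\pi$ in the second), but the parenthetical description and the final constant are right.
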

the last estimate being valid for $\gamma >2$, and $\|f\|_\gamma < \infty$.
\begin{proof}
    We only write the proof for the bound of $\vit[f]$. The bound for $\acc[f]$ can be obtained by a permutation of the variables $x$ and $v$ below. By definition~\eqref{eq:def_J} and taking into account the indicator function in it, we write :
	\begin{align*}
		\left | \vit[f](x,v) \right | &\leq \frac{1}{2 \pi} \int_{\RR^2} \frac{\dd y}{|x-y|} \int_{|v-w|<|x-y|} f(y,w) \, \dd w \\
		&= \frac{1}{2 \pi} \left [ \int_{|x-y| < R} + \int_{|x-y| \geq R} \right ] \frac{\dd y}{|x-y|} \int_{|v-w|<|x-y|} f(y,w) \, \dd w \\
		&\leq \frac{1}{2\pi} \|f\|_{\infty} \int_{|x-y|<R} \frac{1}{|x-y|} \underbrace{\left ( \int_{|v-w|<|x-y|} \, \dd w \right )}_{= \pi |x-y|^2} \, \dd y + \frac{1}{2\pi R } \|f\|_1 \\
		&= \frac{1}{2} \|f\|_{\infty} \int_{|x-y|<R} |x-y| \, \dd y + \frac{1}{2\pi R } \|f\|_1 = \frac{\pi R^3}{3} \|f\|_{\infty} +  \frac{1}{2\pi R} \|f\|_1.
	\end{align*}
	It remains to perform an optimization on $R$. The bound in the r.h.s. is of the form $f(R) = a R^3 + \frac b R$, with $a= \frac \pi 3 \|f\|_\infty, \; b= \frac1 {2 \pi} \|f\|_1$. A short analysis shows that $f$ is minimal when $R=R_m = \left( \frac b {3a} \right)^{\frac 14}$, with a minimal value $f_m = \frac 4 {3^{\frac 34}} b^{\frac 34} a^{\frac14}$. Replacing $a$ and $b$ by their values gives the expected result.
	The second bound is a consequence of~\eqref{eq:prop_norm_gamma}.
\end{proof}

For the 2D and 3D Vlasov-Poisson systems, it is known that the electric field is log-Lipschitz continuous when $\rho_f \in L^{\infty}$. The proof is classical and can be found in \autocite[lemma~8.1]{MAJ2002} in dimension three or \autocite[lemma~3.1]{MAR1994} in dimension two. This enables to define classical characteristics thanks to the Osgood's criterion. In the present case, the fields are actually Lipschitz continuous, when $f \in L^\infty_\gamma$ for $\gamma>2$, as the following proposition shows.
This shows again that the situation is simpler here.

\begin{prop}\label{prop-fields-lipschitz}
	Let $f$ such that $\|f\|_{\gamma}<+\infty$ for some $\gamma>2$.
	Then the fields $\vit[f]$ and $\acc[f]$ defined respectively by~\eqref{vit-expr} and~\eqref{acc-expr}, are Lipschitz continuous in $(x,v)$, with a Lipschitz constant depending only on $\|f\|_{\gamma}$ : 
    \[
    \left\|  \vit[f] \right\|_{\text{Lip}} :=  \sup_{x,v,x_*,v_*} \frac{\left | \vit[f](x,v) - \vit[f](x_*,v_*) \right |}{|x-x_*| + |v-v_*|} \leq  3 \kappa_\gamma \| f \|_\gamma.
    \]
    and  similarly $\left\| \acc[f] \right\|_{\text{Lip}} \leq 3 \kappa_\gamma \| f \|_\gamma$.
\end{prop}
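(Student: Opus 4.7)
The plan is to bring the difference inside the convolution integral defining $\vit[f]$, apply the pointwise variation bound of Lemma~\ref{lem:var_J} under the integral sign, and then estimate the four resulting integrals using Lemmas~\ref{lem:int_bound_1} and~\ref{lem:int_bound_2}. Only the bound for $\vit[f]$ needs to be written out; the one for $\acc[f]$ follows by the $(x,v)$-permutation symmetry noted just after~\eqref{acc-expr}.

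First I would fix $(x,v),(x_*,v_*)\in\RR^4$, set $\delta:=|x-x_*|+|v-v_*|$, and use~\eqref{vit-expr} to write
\[
\vit[f](x,v)-\vit[f](x_*,v_*) = \int_{\RR^4}\bigl[J(x-y,v-w)-J(x_*-y,v_*-w)\bigr]\,f(y,w)\,\dd y\dd w.
\]
Applying Lemma~\ref{lem:var_J} to the arguments $(x-y,v-w)$ and $(x_*-y,v_*-w)$ (whose associated ``$\delta$'' is the one above, since translation in $(y,w)$ does not affect differences) decomposes $|\vit[f](x,v)-\vit[f](x_*,v_*)|$ into four pieces: two ``quadratic'' terms carrying the factor $\tfrac{|x-x_*|}{4\pi}$ and involving $\tfrac{\one_{|v-w|\le |x-y|}}{|x-y|^2}$ (and its starred analogue), and two ``gear'' terms of the form $\tfrac{\one_{|v-w|\le |x-y|\le |v-w|+\delta}}{2\pi|x-y|}$ (and its starred analogue), both integrated against $f(y,w)\,\dd y\dd w$.

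The first pair is handled by Lemma~\ref{lem:int_bound_1}: each of the two integrals is at most $2\pi\kappa_\gamma\|f\|_\gamma$ (apply the lemma once centered at $(x,v)$, once at $(x_*,v_*)$), so their combined contribution is at most $\kappa_\gamma\|f\|_\gamma\,|x-x_*|$. The second pair is handled by Lemma~\ref{lem:int_bound_2}, which gives directly that each of the two gear integrals is bounded by $\kappa_\gamma\|f\|_\gamma\,\delta$, for a combined contribution of at most $2\kappa_\gamma\|f\|_\gamma\,\delta$. Summing the four pieces and using $|x-x_*|\le\delta$ yields
\[
|\vit[f](x,v)-\vit[f](x_*,v_*)| \leq \kappa_\gamma\|f\|_\gamma\bigl(|x-x_*|+2\delta\bigr) \leq 3\kappa_\gamma\|f\|_\gamma\,\delta,
\]
and dividing by $\delta$ and taking the supremum gives the announced Lipschitz constant.

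There is no real obstacle in this step: the singularity analysis has been entirely prepackaged into the three preparatory lemmas, and the proof reduces to assembling them. The only point requiring care is the bookkeeping of the $\tfrac{1}{4\pi}$ and $\tfrac{1}{2\pi}$ constants from Lemma~\ref{lem:var_J}, which must be combined cleanly with the $2\pi\kappa_\gamma$ arising from Lemma~\ref{lem:int_bound_1} and with the linear-in-$\delta$ bound from Lemma~\ref{lem:int_bound_2} in order to land exactly on the explicit factor $3\kappa_\gamma$.
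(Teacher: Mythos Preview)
Your proof is correct and follows exactly the same route as the paper: write the difference as an integral against $f$, apply Lemma~\ref{lem:var_J} under the integral with $\delta=|x-x_*|+|v-v_*|$, then bound the two ``quadratic'' pieces by Lemma~\ref{lem:int_bound_1} (giving $\kappa_\gamma\|f\|_\gamma\,|x-x_*|$) and the two ``gear'' pieces by Lemma~\ref{lem:int_bound_2} (giving $2\kappa_\gamma\|f\|_\gamma\,\delta$), and sum. The bookkeeping of the $1/(4\pi)$ and $1/(2\pi)$ factors is handled correctly and lands on the stated constant $3\kappa_\gamma$.
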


\begin{proof}
Form the expression~\eqref{vit-expr}, we obtain the bound
\[
\left | \vit[f](x,v) - \vit[f](x_*,v_*) \right | \leq \frac{1}{2\pi} \int f(\dd y, \dd w) 
\left | J(x-y,v-w) -  J(x_*-y,v_*-w)  \right |.
\]
Using  the bound of Lemma~\ref{lem:var_J} inside the integral with the notation $\delta= |x-x_*|+|v-v_*|$ leads to the bound
\begin{align*}
| \vit[f](x,v)  & - \vit[f](x_*,v_*)  |  \leq (J_1 + J_1^*)|x-x_*| +  J_2 + J_2^*,  \quad  \text{where}  \\
J_1 &=   \int_{|v-w| \le |x-y| }  \frac{ f(\dd y, \dd w) }{4 \pi |x-y|^2}, \qquad \qquad \quad \;\;
J_1^* =   \int_{|v_*-w| \le |x_*-y| }  \frac{ f(\dd y, \dd w) }{4 \pi |x_*-y|^2}, \\
J_2 & =  \int_{|v-w| \le |x-y| \le |v-w| + \delta}  \frac{f(\dd y, \dd w) }{2 \pi |x-y|}, \qquad 
J_2^* =  \int_{|v_*-w| \le |x_*-y| \le |v_*-w| + \delta}  \frac{f(\dd y, \dd w) }{ 2 \pi |x_*-y|}.
\end{align*}
The integrals $J_1$ and $J_1^*$ may be bounded with the help of Lemma~\ref{lem:int_bound_1}:
\[
J_1 + J_1^* \le \kappa_\gamma \| f \|_\gamma.
\]
The integrals $J_2$ and $J_2^*$ may be bounded with the help of Lemma~\ref{lem:int_bound_2}:
\[
J_2 + J_2^* \le 2   \kappa_\gamma \| f \|_\gamma \delta  = 2 \kappa_\gamma \| f \|_\gamma \bigl( |x-x_*| + |v-v_*| \bigr).
\]
Summing this two bounds leads to 
\[
\left | \vit[f](x,v) - \vit[f](x_*,v_*) \right | \leq 3 \kappa_\gamma \| f \|_\gamma \bigl( |x-x_*| + |v-v_*| \bigr). 
\]
\end{proof}

\subsection{Transport and continuity equation, pushforward and preservation of norms.} \label{subsec:tr_eq}

We recall here the standard resolution of the transport and continuity equation, applied to our particular case. In the meantime, we state precisely some classical definition that will play an important role later. 

The advection fields defined by~\eqref{relation-vit-acc-pot} are divergence free. A case which allows to rewrite the transport equation~\eqref{limit-equation} in a conservative form, also called a \emph{continuity equation}
\begin{equation}\label{eq:continuity}
\partial_t f_t + \divg_x ( \vit[f_t] f_t ) + \divg_v ( \acc[f_t] f_t ) = 0.
\end{equation}
When the field $\vit[f_t]$ and $\acc[f_t]$ are Lipschitz, the continuity equation is usually solved with the help of the characteristics, that is the trajectories associated to ODE driven by the vector field $(\vit[f_t],\acc[f_t])$:
\begin{equation} \label{eq:characteristics}
X_t(z)  = x + \int_0^t \vit[f_s](X_s(z), V_s(z))\dd s, 
\qquad 
V_t(z)  = v + \int_0^t \vit[f_s](V_s(z), V_s(z))\dd s.
\end{equation}
where $z=(x,v)$. the collection of mapping $Z_t=(X_t,V_t)$ for $t \ge 0$ is called the \emph{flow} associated to the vector field.

As a consequence of the Cauchy-Lipschitz theorem\footnote{which applies here as the vector field is assumed to be Lipschitz in position-velocity, with a constant that is locally integrable in time}, the  mapping $Z_t$ is invertible, with in fact a continuous inverse.
At a given time $t$, $Z_t^{-1}$ is in fact the flow associated to the the time reversed vector field $(s,x,v) \to \left( - \vit[f_{t-s}], - \acc[f_{t-s}]\right)$. It satisfies $Z_t^{-1}=Z^{rev}_{t,t}$ with 
\begin{equation} \label{eq:char_reverse}
Z^{rev}_{t,s}(z) = z - \int_0^s \left( \vit[f_{t-u}],\acc[f_{t-u}]\right)(Z^{rev}_{t,u}(z)) \, \dd u,
\end{equation}
Such properties could be found in standard book about ODE, like~\autocite{Hirsch1974}.

In the case where the vector field is divergence free, a property that is always true in our setting, then the flow is volume preserving. This means precisely that for any continuous test function $\varphi : \RR^4 \to \RR$ with bounded support, and any time $t \ge 0$
\begin{equation}\label{eq:measure_preserving}
\int_{\RR^4} \varphi(Z_t(z)) \dd z = \int_{\RR^4} \varphi(z) \dd z.
\end{equation}
To see why this property is related to the preservation of volume, use indicatrix function of set $A$ (with neglectable boundary) instead of continuous function in the above equality, and get $\mathrm{Vol}(Z_t^{-1}(A)) = \mathrm{Vol}(A)$.

For the sake of completeness, we redefine the notion of pushforward.
\begin{definition} \label{def:pushforward}
Let $f \in \proba(\RR^4)$ and $T : \RR^4 \to \RR^4$ a measurable map. Then the pushforward of $f$ by the map $T$ is defined as the probability measure denoted $\pushfwd{T}f$ such that for any measurable set $A \subset \RR^4$, $\pushfwd{T}f(A)  = f(T^{-1}(A))$. Equivalently, for any continuous test function $\varphi :\RR^4 \to \RR$, the following equality holds
\begin{equation} \label{eq:pushforward}
\int_{\RR^4} \varphi(z) \left[\pushfwd{T}f \right](\dd z) = \int_{\RR^4} \varphi(T(z)) f(\dd z). 
\end{equation}
When the map $T$ is volume preserving and has a measurable inverse $T^{-1}$, and $f$ has a density (still denoted by $f$ here), then 
$\pushfwd{T}f$ also has a density and
\[
(\pushfwd{T}f)(z)  = f(T^{-1}(z)).
\]
\end{definition}
The last point is a consequence of the measure preserving property~\eqref{eq:measure_preserving} applied to the equality~\eqref{eq:pushforward} defining the pushforward. 

The solution of the continuity equation writes as the pushforward of the initial condition by the flow. It is also classical that the divergence free property implies the preservation of $L^p$ norms of solutions  along time. We prove this rigorously in the next proposition.

\begin{prop}[Conservation of $L^p$ norms]
\label{prop-cons-norm}
	Let $f \in L^1_{\mathrm loc}(\RR^+,L^\infty_\gamma(\RR^4))$ for some $\gamma >2$ be a solution to~\eqref{limit-equation} in the sense of Definition~\ref{def:sol}. Then for any positive time $t$, $f_t$ is given by the pushforward of $f_0$ by the map $Z_t$ defined in~\eqref{eq:characteristics}
	\[
	f_t := \pushfwd{Z_t}f_0.
	\]
	Moreover, if $f_0 \in L^1 \cap L^\infty(\RR^4)$, then for any $p \in [1,+\infty]$ and any time $t>0$,
	\[
	 \|f_t\|_{L^p_{x,v}} = \|f_0\|_{L^p_{x,v}},
	\]
	
\end{prop}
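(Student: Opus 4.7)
My plan is to first build the characteristic flow from the given solution $f$, then identify $\pushfwd{Z_t} f_0$ with $f_t$ via uniqueness for a linear continuity equation, and finally deduce the $L^p$ conservation from the divergence-free structure of $(\vit,\acc)$.

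First I would fix the given weak solution $f$ and consider the time-dependent field $b_s(z) := (\vit[f_s](z), \acc[f_s](z))$. By Propositions~\ref{bounded-fields} and~\ref{prop-fields-lipschitz}, both $\|b_s\|_\infty$ and the Lipschitz constant of $b_s$ are bounded by a multiple of $\kappa_\gamma \|f_s\|_\gamma$, which lies in $L^1_{\mathrm{loc}}(\RR_+)$ by hypothesis. The Carathéodory version of the Cauchy-Lipschitz theorem then produces a unique flow $Z_t$ solving~\eqref{eq:characteristics}, together with the bi-Lipschitz backward flow~\eqref{eq:char_reverse}; in particular $Z_t$ is a bi-Lipschitz homeomorphism of $\RR^4$ for each $t \ge 0$ (see e.g.~\autocite{Hirsch1974}).

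Next I would set $g_t := \pushfwd{Z_t} f_0$ and check that $g$ is a weak solution of the \emph{same} linear transport equation as $f$ (with $b_s$ now viewed as prescribed, since $f$ is fixed). For any $\psi \in \contcomp^1(\RR_+ \times \RR^4)$ the chain rule gives
\[
\frac{d}{ds} \psi(s, Z_s(z)) = \bigl(\partial_s \psi + \vit[f_s] \cdot \nabla_x \psi + \acc[f_s] \cdot \nabla_v \psi \bigr)(s, Z_s(z)).
\]
Integrating this identity in $s \in \RR_+$, then against $f_0(z)\,\dd z$, and applying the pushforward definition~\eqref{eq:pushforward} transforms it into the weak formulation~\eqref{weak-sense-limit-equation} with $g_s$ in place of $f_s$. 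Since $g$ and $f$ both solve the linear continuity equation driven by the Lipschitz field $b_s$, with the same initial datum $f_0$, the classical uniqueness for such equations (either by the method of characteristics applied backward, using $Z_t^{-1} = Z_{t,t}^{rev}$, or DiPerna-Lions renormalization) yields $f_t = g_t$ almost everywhere.

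For the $L^p$ statement, I would use that by~\eqref{relation-vit-acc-pot} one has $\vit[f_s] = -\pp{\nabla}_x \potlmt[f_s]$ and $\acc[f_s] = \pp{\nabla}_v \potlmt[f_s]$. A direct computation shows $\divg \pp{\nabla} = \partial_1\partial_2 - \partial_2\partial_1 = 0$, so $b_s$ is divergence-free and the flow $Z_t$ is volume-preserving in the sense of~\eqref{eq:measure_preserving}. When $f_0 \in L^1 \cap L^\infty$, the last part of Definition~\ref{def:pushforward} gives the pointwise identity $f_t(z) = f_0(Z_t^{-1}(z))$ almost everywhere, and a change of variables with unit Jacobian yields $\|f_t\|_p = \|f_0\|_p$ for every $p \in [1,\infty)$. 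The case $p = \infty$ follows from the fact that $Z_t$ and $Z_t^{-1}$ preserve Lebesgue null sets, hence preserve essential suprema.

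\textbf{Main obstacle.} The only delicate step is the uniqueness argument in Step~2, which must be invoked in the correct functional class (here $L^1 \cap L^\infty$) and requires verifying that $\pushfwd{Z_t} f_0$ indeed lies in that class --- a consequence of the bi-Lipschitz, measure-preserving character of $Z_t$. A clean alternative that avoids uniqueness altogether is to compute $\frac{d}{dt}(f_t \circ Z_t)$ distributionally and show it vanishes, directly giving the characteristic formula $f_t \circ Z_t = f_0$.
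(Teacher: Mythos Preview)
Your proposal is correct and follows essentially the same route as the paper: build the Lipschitz flow $Z_t$ from Proposition~\ref{prop-fields-lipschitz}, note it is volume-preserving because the field is divergence-free, identify $f_t$ with $\pushfwd{Z_t}f_0$, and read off the $L^p$ conservation via $f_t(z)=f_0(Z_t^{-1}(z))$. The only difference is that you spell out the identification $f_t=\pushfwd{Z_t}f_0$ via uniqueness for the linear continuity equation, whereas the paper simply asserts the characteristic formula as standard; your version is a bit more careful on that point but otherwise identical in spirit.
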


\begin{proof}
According to Proposition~\ref{prop-fields-lipschitz} the advection fields $\vit[f_t]$ and $\acc[f_t]$ are Lipschitz for almost every time $t\ge 0$.
In that case, the mappings $Z_t$ for $t \ge 0$ are well defined thanks to the Cauchy-Lipschitz theorem, and inversible. Since the field $(\vit[f_t],\acc[f_t])$ is divergence free, they also are measure preserving. So we can write according to Definition~\ref{def:pushforward}
\[
f_t(z) =  \pushfwd{Z_t}f_0 (z)  = f_0(Z_t^{-1}(z)).
\]

Since $Z_t$ is measure preserving, an application of~\eqref{eq:measure_preserving} leads for any $p \in [1,\infty)$
\[
\|f_t\|_{L^p}^p =  \int_{\RR^4} |f_0(Z_t^{-1}(z))|^p \, \dd z
=  \int_{\RR^4} |f_0(z)|^p \, \dd z
= \|f_0\|_{L^p}^p.
\]
The case $p=\infty$ follows as a limit as $p \to \infty$.

%%%%%%%%%%%%%%%%%%%%%%%ù Partie commentée
\iffalse
$f$ is a solution to the linear transport equation
\begin{equation}\label{linear-equation}
	\partial_t g + \vit[f_t] \cdot \nabla_x g + \acc[f_t] \cdot \nabla_v g = 0, \quad t>0, \quad (x,v) \in \RR^2 \times \RR^2.
\end{equation}
with initial condition $f_0$. Here the advection field $\bigl(\vit[f_t],\acc[f_t]\bigr)$ is Lipschitz - according to Proposition~\ref{prop-fields-lipschitz}.  In that case the solution of that linear Cauchy problem~\ref{linear-equation} is unique in the class $L^1(\RR^+,\mathcal P_1)$ (for instance) and can be expressed by the method of characteristics
\begin{equation} \label{eq:method_car}
f_t(x,v) = f_0(X(0,t,x,v),V(0,t,x,v)),
\end{equation}
where the characteristics $s \to (X(s,t,x,v),V(s,t,x,v))$ associated to equation~\eqref{limit-equation} are the trajectories of the backward ordinary differential equation 
\begin{equation} \label{eq:car}
	\begin{cases}
		X'(s,t,x,v) = \vit[f_s](X(s,t,x,v),V(s,t,x,v))  \\
		V'(s,t,x,v) = \acc[f_s](X(s,t,x,v),V(s,t,x,v)) 
	\end{cases} 
 \; \text{with}  \quad 
	\begin{cases}
	X(t,t,x,v) =x, \\
	V(t,t,x,v) = v.
	\end{cases}
\end{equation}
In the case where the field $\bigl(\vit[f_t],\acc[f_t]\bigr)$ is divergence free, the
change of variable $(x,v) \to \bigl( X(s,t,x,v),V(s,t,x,v)\bigr)$ is measure preserving for any $0<s<t$. This implies the preservation of any $L^p$-norm, for $p \in [1,\infty]$.
\fi 
\end{proof}

Next, the boundedness of the fields enables to show the propagation of the $\|f_t\|_\gamma$ norms, as shows the next proposition.
\begin{prop}\label{bound-decreasing-f}
	Let  $\gamma>2$. For any solution $f \in L^1_{\mathrm loc}(\RR^+,L^\infty_\gamma)$ to~\eqref{limit-equation} with $f_0$ as initial datum satisfying $\|f_0\|_{\gamma}<+\infty$,
    \begin{equation}\label{decay-property-f_t}
	\|f_t\|_{\gamma} \leq \left( 1+ 2 c \, \|f_0 \|_\infty^{\frac14} \|f_0\|_1^{\frac34} t \right)^{2\gamma}\|f_0\|_{\gamma}, \quad t\geq 0,
    \end{equation}
    where $c$ is the constant defined in Proposition~\ref{bounded-fields}.
\end{prop}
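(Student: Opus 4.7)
The plan is to use the representation of $f_t$ as a pushforward by the characteristic flow provided by Proposition~\ref{prop-cons-norm}, combined with the $L^\infty$ bound on the advection fields from Proposition~\ref{bounded-fields}.

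First, since $\gamma>2$ implies $\|f_0\|_\gamma<+\infty$ and in particular $f_0 \in L^1\cap L^\infty$ by~\eqref{eq:prop_norm_gamma}, Proposition~\ref{prop-cons-norm} gives $f_t = \pushfwd{Z_t}f_0$, that is $f_t(x,v) = f_0(Z_t^{-1}(x,v))$ a.e., where $Z_t=(X_t,V_t)$ is the flow of~\eqref{eq:characteristics} and the equality of $L^p$ norms ($p=1,\infty$) is preserved along time. Hence, applying Proposition~\ref{bounded-fields} at every time $s\ge 0$ and using the time-invariance of $\|f_s\|_1$ and $\|f_s\|_\infty$, we obtain a uniform-in-time bound
\[
\|\vit[f_s]\|_\infty, \; \|\acc[f_s]\|_\infty \le M, \qquad M := c\,\|f_0\|_\infty^{\frac14}\|f_0\|_1^{\frac34},
\]
where $c$ is the constant from Proposition~\ref{bounded-fields}.

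Second, I would bound the displacement along characteristics. Writing $(x_0,v_0) := Z_t^{-1}(x,v)$ so that $(x,v)=Z_t(x_0,v_0)$, the ODE~\eqref{eq:characteristics} together with the bounds above gives
\[
|x-x_0|\le \int_0^t |\vit[f_s](Z_s(x_0,v_0))|\,\dd s \le Mt, \qquad |v-v_0|\le Mt.
\]
Consequently, using the triangle inequality $1+|x|\le 1+|x_0|+Mt \le (1+|x_0|)(1+Mt)$, and similarly for $v$, we get
\[
(1+|x|)^\gamma(1+|v|)^\gamma \le (1+Mt)^{2\gamma}(1+|x_0|)^\gamma(1+|v_0|)^\gamma.
\]

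Third, combine the pushforward identity with the previous inequality. For a.e. $(x,v)$,
\[
(1+|x|)^\gamma(1+|v|)^\gamma|f_t(x,v)| \le (1+Mt)^{2\gamma}(1+|x_0|)^\gamma(1+|v_0|)^\gamma|f_0(x_0,v_0)| \le (1+Mt)^{2\gamma}\|f_0\|_\gamma,
\]
and taking essential supremum in $(x,v)$ yields the stated inequality (up to the harmless factor $2$ in front of $c$, which one may keep to absorb any refinement such as controlling both $|x-x_0|$ and $|v-v_0|$ together via the Euclidean norm of the field in $\RR^4$).

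The only step requiring care is the rigorous use of characteristics, i.e.\ checking that the fields are Lipschitz in space uniformly (locally) in time so that Cauchy--Lipschitz applies and $Z_t$ is a well-defined measure-preserving homeomorphism; this is however exactly what Proposition~\ref{prop-fields-lipschitz} and Proposition~\ref{prop-cons-norm} provide. No serious difficulty is expected beyond this.
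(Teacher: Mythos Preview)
Your proposal is correct and follows essentially the same route as the paper: represent $f_t$ via the flow, use Proposition~\ref{bounded-fields} together with conservation of $L^p$ norms to get a uniform bound on the characteristic displacement, and then the elementary inequality $(1+|x|)\le (1+|x_0|)(1+|x-x_0|)$ to conclude. Your component-wise estimate $|x-x_0|,|v-v_0|\le Mt$ is in fact slightly sharper than the paper's bound on the full $\RR^4$ displacement (which picks up the factor~$2$ from $|(\vit,\acc)|\le|\vit|+|\acc|$), so your remark about the ``harmless factor~$2$'' is exactly right.
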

\begin{proof}
	We follow the proof done in~\autocite{UKA1978} for the Vlasov-Poisson system. The present case is however more favorable because the decay exponent $\gamma$ is the same for $f_0$ and $f_t$, whereas it is divided by $2$ in the Vlasov-Poisson case.
	
	As $\gamma>2$, $f_0 \in (L^1 \cap L^{\infty})(\RR^4)$ by~\eqref{eq:prop_norm_gamma}. So does $f_t$ by Proposition~\ref{prop-cons-norm} about propagation of $L^p$ norms, which applies here because $f$ is assumed to be smooth enough. And $f_t(z) = f_0(Z_t^{-1}(z))$
	
	Then, $\vit[f_t]$ and $\acc[f_t]$ are bounded according to Proposition~\ref{bounded-fields}. And with the constant $c$ defined in there, 	the integration of the reverse characteristics~\eqref{eq:char_reverse} gives
	\begin{align}
		Z_t^{-1}(z) - z  &=  - \int_0^t \left(\vit[f_{t-s}],\acc[f_{t-s}]\right)(Z^{rev}_{t,s}) \, \dd s,  	\qquad \nonumber \\
		\left| Z_t^{-1}(z)  - z \right| &  \le 2 \, c  \int_0^t \|f_s \|_\infty^{\frac14} \|f_s\|_1^{\frac34} \dd s. \label{eq:control_traj}
	\end{align}
	% thanks to Proposition~\ref{bounded-fields}. 
	Thanks to Proposition~\ref{prop-cons-norm} that applies here in view of the hypothesis, we simplify in
\[
	\left| Z_t^{-1}(z)  - z \right|  \le 2 c \,  t \, \|f_0 \|_\infty^{\frac14} \|f_0\|_1^{\frac34}.
\]
	Then, using the following inequality valid for all $x,x' \in \RR^d$ 
	\begin{equation}\label{ineq-x-x'}
		\frac{1+|x|}{1+|x'|} \leq 1+|x-x'|,
	\end{equation}
	it comes :
	\begin{align*}
	(1+|x|)^{\gamma} (1+|v|)^{\gamma}	f_t(z) &=  (1+|x|)^{\gamma} (1+|v|)^{\gamma} f_0(Z_t^{-1}(z)) \\
	& \leq  \|f_0\|_{\gamma} \frac{ (1+|x|)^{\gamma} (1+|v|)^{\gamma}}{\left(1+\left|(Z_t^{-1})_x(z)\right|\right)^{\gamma} \left(1+\left|(Z_t^{-1})_v\right|\right)^{\gamma}} \\
		&\le   \|f_0\|_{\gamma} \left(1+\left | (Z_t^{-1})_x(z) - x \right| \right)^{\gamma} \left( 1+\left|  (Z_t^{-1})_v(z) - v \right| \right)^{\gamma} \\
%		&\leq \frac{\left ( 1+ \left | \int_0^t \vit[f(r)](X(r),V(r)) \, \dd r \right | \right )^{\gamma} \left ( 1+ \left | \int_0^t \acc[f(r)](X(r),V(r)) \, \dd r \right | \right )^{\gamma} \|f_0\|_{\gamma}}{(1+|x|)^{\gamma} (1+|v|)^{\gamma}} \\
\|f_t\|_\gamma		&\leq  \|f_0\|_{\gamma} \left(1+ 2 c \,  t \, \|f_0 \|_\infty^{\frac14} \|f_0\|_1^{\frac34}\right)^{2\gamma},
	\end{align*}
thanks to~\eqref{eq:control_traj} and a supremum on $x$ and $v$ for the last line.
%	In view of the conservation on the $L^1$ and $L^\infty$ norms, we can replace the integral in the last bound by $\|f_0 \|_\infty^{\frac14} \|f_0\|_1^{\frac34} t$ and~\eqref{decay-property-f_t} is proven.
\end{proof}

\subsection{Wasserstein distance}

One can equip $\proba_1(\RR^4)$ (defined in Subsection~\ref{subsec:not}) with the Wasserstein distance of order $1$ defined by
\[
	\wass_1(\mu,\nu) = \inf_{\pi \in \Pi(\mu,\nu)} \int_{\RR^4} |x-y| \, \dd \pi(x,y), \quad \mu,\nu \in\proba_1(\RR^4),
\]
where $\Pi(\mu,\nu)$ is the set of all \emph{transference planes}, i.e. probability measures on $\RR^4 \times \RR^4$ whose marginals are $\mu$ and $\nu$.
If $\mu$ and $\nu$ are absolutely continuous with respect to the Lebesgue measure, then one also has
\[
\wass_1(\mu,\nu) = \inf_{\pushfwd{\transpt} \mu = \nu} \int_{\RR^4} |x- \transpt(x)| \, \dd \mu(x),
\]
where the infimum runs over all \emph{transport maps}, i.e. measurable maps $T:\RR^4 \rightarrow \RR^4$ that push forward $\mu$ onto $\nu$, that is to say $\pushfwd{\transpt}\mu = \nu$ according to Definition~\ref{def:pushforward}. 
%In other words,
%\[ \nu(A) = \mu(\transpt^{-1}(A)) \text{ for any measurable } A \subset \RR^4. \]
As we will only deal with absolutely continuous functions, the latter formulation will be adopted. 
This is not mandatory, its main vantage is to simplify the notation.
For much more information about optimal transport and Wasserstein distances, one refers to~\autocite{VIL2003,ambrosio}.

\section{Wasserstein stability for the limit equation}\label{stability-wasserstein}

This section is dedicated to the proof of theorem~\ref{theorem-stability}.  
We construct an appropriate coupling at any time $t$ between solutions of Equation~\eqref{limit-equation}. Then, we carefully control the evolution of the transport cost along time. That is we  estimate the difference in the advection fields created by the two solutions, with the help of the technical lemmas of the previous section. After that, a Grönwall argument allows to conclude.

%The technical part is to carefully write the several step leading to the key estimate.

\subsection{The appropriate coupling}

Let $f,g  \in L^1_{\mathrm{loc}}(\RR^+,L^\infty_\gamma(\RR^4))$ be two solutions to~\eqref{limit-equation} in the sense of Definition~\ref{def:sol}.
%Theorem~\ref{theorem-existence} gives the existence of $f_t$ (resp. $g_t$), a global in time weak solution to~\eqref{limit-equation} associated to the initial condition $f_0$ (resp. $g_0$). Moreover, the estimate \eqref{growth-norm} is verified for $f_t$ and $g_t$. In particular,
%\[
%	\|f_t\|_{\gamma}+\|g_t\|_{\gamma}<+\infty, \quad t\geq 0.
%\]
The goal is to find a control of the distance between $f_t$ and $g_t$ for any time $t\geq 0$, depending on their distance at $t=0$. The Wasserstein metric will be used to quantify the gap between two solutions, a tool well adapted for this kind of problems.
Let $\transpt_0 : \RR^4 \to \RR^4$ be an optimal transport map transporting $f_0$ onto $g_0$, 
which always exists because $f_0$ has a density (See~\cite{VIL2003}). Namely
\begin{equation}\label{T_0-stability}
	\wass_1(f_0,g_0) = \int_{\RR^4} \left |(x,v)-\transpt_0(x,v) \right | \, f_0(\dd x, \dd v)
	= \int_{\RR^4} \left |z-\transpt_0(z) \right | \, f_0(\dd z),
\end{equation}
where $z=(x,v)$. Let $Z_t^f = (X_t^f,V_t^f)$ and $Z_t^g = (X_t^g,V_t^g)$ be the characteristics at time $t$ associated to $f_t$ and $g_t$ respectively, having the value $(x,v)$ for $t=0$ as in~\eqref{eq:characteristics}. They exist because the fields are Lipschitz continuous according to proposition~\ref{prop-fields-lipschitz}. 

%As $f_t$ and $g_t$ satisfy~\eqref{limit-equation}, it holds
%\begin{align*}
%	&\frac{\dd}{\dd t} Z_t^f(x,v) = \left ( \vit[f_t], \acc[f_t] \right )(Z_t^f(x,v)), \\
%	&\frac{\dd}{\dd t} Z_t^g(x,v) = \left ( \vit[g_t], \acc[g_t] \right )(Z_t^g(x,v)), \\
%	&Z_0^f(x,v) = Z_0^g(x,v) = (x,v).
%\end{align*}
According to Proposition~\ref{prop-cons-norm}, $Z_t^f$ transports $f_0$ onto $f_t$ and $Z_t^g$ transports $g_0$ onto $g_t$, i.e.
\[
	f_t = \pushfwd{Z_t^f} f_0, \qquad 
	g_t = \pushfwd{Z_t^g} g_0.
\]
Since $Z_t^f$ is smooth and invertible, one can thus build a transport map $\transpt_t$ transporting $f_t$ onto $g_t$ by (See Figure~\ref{fig:Tmap})
\begin{equation} \label{def:transport}
	\transpt_t = Z_t^g \circ \transpt_0 \circ (Z_t^f)^{-1},
\end{equation}
which is illustrated by figure~\ref{transport-limit-equation-two-solutions}. That family of transport map will allow to control the transport cost $W(f_t,g_t)$.
\begin{figure} \label{fig:Tmap}
	\centering
	\begin{tikzpicture}
	%\draw[very thin,color=gray] (0,0) grid (2,2);
	\node at (0,0) {$f_t$};
	\node at (2,0) {$g_t$};
	\node at (0,2) {$f_0$};
	\node at (2,2) {$g_0$};
	\draw[->] (0.3,2) -- (1.7,2);
	\draw[->] (0.3,0) -- (1.7,0);
	\draw[->] (0,0.3) -- (0,1.7);
	\draw[->] (2,1.7) -- (2,0.3);
	\node[above] at (1,2) {$\transpt_0$};
	\node[below] at (1,0) {$\transpt_t$}; 
	\node[left] at (0,1) {$(Z_t^f)^{-1}$};
	\node[right] at (2,1) {$Z_t^g$};
	\end{tikzpicture}
	\caption{Transport between two solutions to~\eqref{limit-equation}}
	\label{transport-limit-equation-two-solutions}
\end{figure}
%The following change of variables will be often used :
%\begin{equation} \label{eq:ch_var_T}
%	\int_{\RR^4} h(x,v) g_t(\dd x, \dd v) = \int_{\RR^4} h(\transpt_t(x,v)) f_t(\dd x, \dd v),
%\end{equation}
%for any $h \in L^\infty(\RR^4)$. 

\subsection{Estimate on the transport cost}

By definition of $\wass_1$, and denoting $\transpt_t=(\transpt_t^x,\transpt_t^v)$, we have
\begin{equation} \label{deft-Q-uniqueness}
\begin{aligned}
	& \hspace{2cm} \wass_1(f_t,g_t) \leq Q(t)  \quad \text{where} \\ 
	Q(t) & := \int_{\RR^4} \left |z-\transpt_t(z) \right | \, f_t(\dd z)
	= \int_{\RR^4} \left | \left ( x-\transpt_t^x(x,v), v-\transpt_t^v(x,v) \right ) \right | \, f_t(\dd x, \dd v) \\
	&= \int_{\RR^4} \left | \left ( X_t^f(x,v)-X_t^g(\transpt_0(x,v)), V_t^f(x,v)-V_t^g(\transpt_0(x,v)) \right ) \right | \, f_0(\dd x, \dd v) \\
	& \le \int_{\RR^4} \left |  X_t^f(z)-X_t^g(\transpt_0(z)) \right | \, f_0(\dd z)
	+ \int_{\RR^4} \left | V_t^f(z)-V_t^g(\transpt_0(z))  \right| \, f_0(\dd z).
\end{aligned}
\end{equation}
%We will try to apply a Grönwall-type argument.
In view of the definition~\eqref{eq:characteristics} of the characteristics, we write
\begin{align*}
\left |  X_t^f(z)-X_t^g(\transpt_0(z)) \right | &=
\left |  \int_0^t \left(\vit[f_s](Z_s^f(z))- \vit[g_s](Z_s^g(\transpt_0(z)))\right) \,\dd s \right | \\
& = \left |  \int_0^t \left(\vit[f_s](Z_s^f(z))- \vit[g_s](\transpt_t(Z_s^f(z)))\right) \,\dd s \right |,
\end{align*}
where we used the definition~\ref{def:transport} of the transport $\transpt_t$. A similar equality holds for the acceleration term $\acc$. Integrating~\eqref{deft-Q-uniqueness} in $z$ w.r.t. $f_0$ and using Fubini, we obtain
\[
Q(t)  \le \int_0^t I(s) \, \dd s +  \int_0^t J(s) \, \dd s
\]

\begin{align*}
\text{with } \quad I(s)  & :=  \int_{\RR^4} \left |  \vit[f_s](Z_s^f(z)) - \vit[g_s](\transpt_s(Z_s^f(z))) \right | \, f_0(\dd z) \\
	& =  \int_{\RR^4} \big | \vit[f_s](z) - \vit[g_s](\transpt_s(z)) \big | \, f_s(\dd z)  \\
 J(s) &  := \int_{\RR^4} \big | \acc[f_s](z) - \acc[g_s](\transpt_t(z)) \big | \, f_s(\dd z) 
\end{align*}
To get the second line, we used the fact that $f_t = \pushfwd{Z^f_t} f_0$ and the change of variable property~\eqref{eq:pushforward}.
Both $I$ and $J$ will be bounded with the help of the following proposition, which is in fact a bit more general that what is strictly requested here. It allows the use of different densities for the calculation of the fields, a hypothesis that will be useful for the proof of Theorem~\ref{theorem-existence}. Here we will apply it only in a particular case.
%we provide in the following proposition a general bound that applies to $I(t)$ and $J(t)$.

\begin{prop} \label{prop:key_estim}
Let $\gamma > 2$. Assume that $f,g, \tilde f, \tilde g \in \mathcal P(\RR^4) \cap L^\infty_\gamma(\RR^2)$. Assume also that $\transpt$ (resp. $\ttranspt$) is a mapping from $\RR^4$ onto itself that transport $f$ onto $g$: $\pushfwd{\transpt} f=g$ (resp. $\tilde f$ onto $\tilde g$: $\pushfwd{\ttranspt} \tilde f= \tilde g$). The following bound holds
\begin{align*}
\int f(\dd z) \left| \vit[\tilde f](z) - \vit[ \tilde g](\transpt(z)) \right|  & \le \frac{3 \kappa_\gamma}{\sqrt 2} \left( \bigl(\| \tilde f \|_\gamma + \| \tilde g \|_\gamma\bigr) Q + \bigl(\|  f \|_\gamma + \|  g \|_\gamma \bigr) \tilde Q \right)  \\
%\left( \int_{\RR^4} f(\dd z) \left|z - \transpt(z) \right| +\int_{\RR^4} f(\dd z) \left|z - \transpt(z) \right| \right).
 \text{where } \quad 
Q & : = \int_{\RR^4} f(\dd z) \left|z - \transpt(z) \right| \quad \text{and }\quad 
\tilde Q : = \int_{\RR^4} \tilde f(\dd z) \left|z - \ttranspt(z) \right|.
\end{align*}
\end{prop}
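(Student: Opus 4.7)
The plan follows the same pattern as the proof of Proposition~\ref{prop-fields-lipschitz}: first apply the geometric increment bound of Lemma~\ref{lem:var_J} pointwise, then distribute the integrations using Lemmas~\ref{lem:int_bound_1} and~\ref{lem:int_bound_2}. The new ingredient, which will be the main obstacle, is that the displacement parameter $\delta$ in Lemma~\ref{lem:var_J} now depends on \emph{both} integration variables, which prevents a direct use of Lemma~\ref{lem:int_bound_2}.

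I would first use the pushforward identity $\tilde g = \pushfwd{\ttranspt}\tilde f$ to rewrite
\[
\vit[\tilde f](z) - \vit[\tilde g](\transpt(z)) = \frac{1}{2\pi}\int \bigl[J(z - \tilde z) - J(\transpt(z) - \ttranspt(\tilde z))\bigr]\, \tilde f(\dd \tilde z),
\]
so that, after taking absolute values and integrating against $f(\dd z)$, the problem reduces to estimating a double integral of $|J - J|$ against $f(\dd z)\tilde f(\dd \tilde z)$. Denote $D(z) := |z - \transpt(z)|$ and $\tilde D(\tilde z) := |\tilde z - \ttranspt(\tilde z)|$, so that $Q = \int f D$ and $\tilde Q = \int \tilde f \tilde D$. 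A componentwise triangle inequality together with $|a| + |b| \leq \sqrt 2 \sqrt{|a|^2 + |b|^2}$ on $\RR^2 \times \RR^2$ gives the basic geometric bounds $|x - x_*| \leq D(z) + \tilde D(\tilde z)$ and $\delta \leq \sqrt 2 (D(z) + \tilde D(\tilde z))$. Applying Lemma~\ref{lem:var_J} pointwise then produces two ``smooth'' contributions carrying factors $|x-x_*|/|x-y|^2$ and $|x-x_*|/|x_*|^2$, and two indicator contributions with factors $\one_{|v-w| \leq |x-y| \leq |v-w|+\delta}/|x-y|$ and its $(x_*,v_*)$ counterpart.

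For the smooth pieces, I would substitute $|x - x_*| \leq D(z) + \tilde D(\tilde z)$ and apply Fubini. When the singularity is $|x-y|^{-2}$, Lemma~\ref{lem:int_bound_1} directly controls the inner integral by a constant multiple of $\kappa_\gamma \|\tilde f\|_\gamma$ or $\kappa_\gamma \|f\|_\gamma$. When the singularity is $|x_*|^{-2}$, one first pushes $\tilde f$ forward to $\tilde g$ via $\ttranspt$ (resp.\ $f$ to $g$ via $\transpt$), which reduces the integral to the previous form and yields bounds involving $\|\tilde g\|_\gamma$ (resp.\ $\|g\|_\gamma$). Summing the four resulting terms produces a contribution of the form announced in the statement.

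The main obstacle is the indicator pieces, whose $\delta$ couples $z$ and $\tilde z$. I would circumvent this with the elementary pointwise domination
\[
\one_{0 \leq a \leq D_1 + D_2} \leq \one_{0 \leq a \leq 2 D_1} + \one_{0 \leq a \leq 2 D_2},
\]
valid because $a > 2 D_1$ and $a > 2 D_2$ together imply $a > D_1 + D_2$. Applied with $D_1 = \sqrt 2 D(z)$ and $D_2 = \sqrt 2 \tilde D(\tilde z)$, it decouples the two dependencies: in the first summand only $D(z)$ appears, and it is constant when integrating in $\tilde z$, so Lemma~\ref{lem:int_bound_2} produces an inner bound proportional to $\kappa_\gamma\|\tilde f\|_\gamma D(z)$ that integrates to $\bigO(\kappa_\gamma\|\tilde f\|_\gamma Q)$; the second summand is treated symmetrically by integrating in $z$ first, which is allowed by the $z \leftrightarrow \tilde z$ symmetry of the integrand. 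The companion indicator term involving $|x_*|$ is dealt with in the same way after a preliminary pushforward to $g$ and $\tilde g$. Summing the four contributions yields the claimed inequality.
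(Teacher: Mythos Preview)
Your proposal is correct and follows essentially the same route as the paper: rewrite the integrand via the pushforward $\tilde g = \pushfwd{\ttranspt}\tilde f$, apply Lemma~\ref{lem:var_J} pointwise, handle the ``smooth'' terms by the triangle split $|x-x_*| \le D(z)+\tilde D(\tilde z)$ together with Lemma~\ref{lem:int_bound_1} (using a pushforward to $g,\tilde g$ for the starred pieces), and decouple the indicator terms by the very same trick $\one_{0\le a\le D_1+D_2}\le \one_{0\le a\le 2D_1}+\one_{0\le a\le 2D_2}$ before invoking Lemma~\ref{lem:int_bound_2}. The only cosmetic difference is that the paper works with $\delta(x,v)=|x-x_*|+|v-v_*|$ and $\delta(y,w)$ separately rather than the $4$D displacements $D,\tilde D$, which makes the Fubini step slightly more transparent but is equivalent to what you wrote.
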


That proposition is the main calculation of our work. It allows to obtain the Grönwall estimate that is necessary to perform the proof of the stability result of Theorem~\ref{theorem-stability} and of the existence result of Theorem~\ref{theorem-existence} (that relies in fact on a stability estimate between two solutions of slightly mollified equation). We postpone its proof to the next section, and now apply it in order to estimate $Q(t)$ and eventually  conclude the proof.

\subsection{Conclusion of the proof of Theorem~\ref{theorem-stability}}
We recall that 
\[
I(t) = \int_{\RR^4} \big | \vit[f_t](z) - \vit[g_t](\transpt_t(z)) \big | \, f_t(\dd z)
\]
To bound it, we apply Proposition~\ref{prop:key_estim} with $f= \tilde f = f_t$, $g= \tilde g = g_t$, and $\transpt = \ttranspt = \transpt_t$. We get
\[
I(t)  \leq 3 \sqrt2 \, \kappa_\gamma \bigl(\| f_t \|_\gamma + \|g_t \|_\gamma\bigr) Q(t).
\]
In view of the symmetry between $\vit[f]$ and $\acc[f]$, the same bound for $\acc[f]$ is obtained exchanging the role of $x$ and $v$. It leads to the Gr\"onwall estimate
\[
Q(t)  \le  \int_0^t \left[I(u) + J(u) \right] \, \dd u   \le 6 \sqrt 2 \, \kappa'_\gamma \int_0^t \bigl(\| f_u \|_\gamma + \|g_u \|_\gamma\bigr) Q(u) \, \dd u, \qquad  
\kappa_\gamma' = 6 \sqrt 2 \, \kappa_\gamma
.
\]
An application of the usual Gr\"onwall lemma leads to
\[
Q(t)  \le Q(0) e^{\kappa_\gamma' \int_0^t (\|f_s\|_{\gamma}+\|g_s\|_{\gamma}) \dd s}
\]
Recalling that (cf.~\eqref{deft-Q-uniqueness}  and~\eqref{T_0-stability})
$ \wass_1(f_t,g_t) \leq Q(t)$ and $Q(0)=\wass_1(f_0,g_0)$,
the claimed stability estimate follows  for all $t \ge 0$,
\[
\wass_1(f_t,g_t) \leq \wass_1(f_0,g_0) e^{6 \kappa_\gamma \int_0^t (\|f_s\|_{\gamma}+\|g_s\|_{\gamma}) \dd s}.
\]
This concludes the proof of Theorem~\ref{theorem-stability}.
%with $C_1=C_1(\gamma,\|f_0\|_{\infty},\|g_0\|_{\infty},\|f_0\|_{\gamma},\|g_0\|_{\gamma})>0$. Thus, the estimate~\eqref{stability-estimate} is proved.

%%%%%%%%%%%%%%%%%%%%%%%%%%%%%%%%%%%%%%%%%%%%%%%%%%%%%%%%
\subsection{Proof of the Proposition~\ref{prop:key_estim}}

\begin{proof}[Proof of the Proposition~\ref{prop:key_estim}]

\mbox{}

\textsc{Step 1 : Separation of the integral}
We first use the transport relations $g = \pushfwd{\transpt} f$ and $\tilde g = \pushfwd{\tilde \transpt} \tilde f$ and Definition~\ref{def:pushforward} to write the term to bound as a integral involving $f$ and $\tilde f$ only (and not $g$ and $ \tilde g$). Using the definition~\eqref{vit-expr}, we have
\[
\vit[\tilde f](x,v) = \frac{1}{2\pi} \int_{\RR^4} \tilde f(\dd y, \dd w) J(x-y,v-w).
\]
Using the shortcuts
\[
x_* = \transpt^x(x,v), \quad
v_* = \transpt^v(x,v), \quad
y_* = \transpt^x(y,w), \quad
w_* = \transpt^v(y,w)
\]
%also with $\transpt(x,v)= \bigl( \transpt^x(x,v), \transpt^v(x,v)\bigr)$ and 
%$\ttranspt(x,v)= \bigl( \ttranspt^x(x,v), \ttranspt^v(x,v)\bigr)$
(which we shall handle carefully as they hide the fact that the variables with a star subscript are in fact functions of the variables without star subscript)  we  write
\begin{align*}
	\vit[\tilde g](\transpt(x,v)) &= \int_{\RR^4} \tilde g(\dd y, \dd w) J(\transpt^x(x,v)-y,\transpt^v(x,v)-w) \\
	&=  \int_{\RR^4} \tilde f (\dd y, \dd w) J(\transpt^x(x,v)-\ttranspt^x(y,w),\transpt^v(x,v)-\ttranspt^v(y,w)) \\
	&= \int_{\RR^4} \tilde f (\dd y, \dd w) J(x_*-y_*,y_*-w_*).
\end{align*}
hence denoting by $I$ the quantity in the l.h.s. of Proposition~\ref{prop:key_estim},
\begin{equation} \label{I-stability}
	I \leq \int_{\RR^8} \hspace{-3mm} f(\dd x, \dd v) \tilde f(\dd y, \dd w) \left| 
	J(x-y,v-w) - J(x_*-y_*,v_*-w_*) \right|. 
\end{equation}

Using the Lemma~\ref{lem:var_J} in the bound~\eqref{I-stability} for $I$, allows to obtain the new bound 
\[
	I\leq I_1 +I_1^*+I_2+I_2^*, \quad \text{with}
\]
\begin{align}
	I_1 &= \iint_{|x-y|>|v-w|} f(\dd x, \dd v) \tilde f(\dd y, \dd w) \frac{|(x-y)-(x_*-y_*)|}{4 \pi |x-y|^2}, \label{I_1}\\
	I_1^* &= \iint_{|x_*-y_*|>|v_*-w_*|} f(\dd x, \dd v) \tilde f(\dd y, \dd w) \frac{|(x-y)-(x_*-y_*)|}{4 \pi |x_*-y_*|^2}, \label{I_3} \\
	I_2 &= \iint_{|v-w| \le |x-y| \le |v-w|  + \bar \delta}  f(\dd x, \dd v) \tilde f(\dd y, \dd w) \frac1{2 \pi |x-y|}, \label{I_5} \\
    I_2^* &= \iint_{|v_*-w_*| \le |x_*-y_*| \le |v_*-w_*|  + \bar \delta}  f(\dd x, \dd v) \tilde f(\dd y, \dd w) \frac1{2 \pi |x_*-y_*|},
\end{align}
where 
\begin{equation} \label{eq:def_delta_bar}
\begin{aligned}
\bar \delta &  = \delta(x,v) + \delta(y,w)
 \le 2\bigl( \delta(x,v) \vee \delta(y,w)  \bigr), \\
 \text{with } \quad \delta(x,v) & := |x-x_*| + |v-v_*| \quad \text{ and } \quad 
 \delta(y,w) = |y-y_*|+|w-w_*|.
\end{aligned}
\end{equation}
with the notation $a\vee b = \max(a,b)$ for any real $a,b$.  We recall that $\delta$ is a function of $(x,v)$ only because $(x_*,v_*)$ depends only on $(x,v)$.
%There are symmetries in these six terms that will reduce the number of them to treat.

\medskip
\textsc{Step 2 : Estimates on $I_1$ and $I_1^*$}

We estimate $I_1$ with the help of Fubini Theorem and Lemma~\ref{lem:int_bound_1}. The strategy for $I_1^*$ follows the same line with an additional use of the pushforwards $g = \pushfwd{\transpt} f$ and $\tilde g = \pushfwd{\tilde \transpt} \tilde f$.

First we use the triangular inequality to split $|(x-y)-(x_*-y_*)|$ in $|x-x_*| + |y-y_*|$. This is interesting because $x-x_*$ depends only on $(x,v)$ (and $y-y_*$ only on $(y,w)$), a fact that allows to use Fubini theorem.
\begin{multline*}
4 \pi I_1  \le  
\int_{\RR^2} f(\dd x, \dd v) |x-x_*| \int_{|x-y|>|v-w|}   \frac{ \tilde f(\dd y, \dd w)}{|x-y|^2} \\ +
\int_{\RR^2} \tilde f(\dd y, \dd w) |y-y_*| \int_{|x-y|>|v-w|}  \frac{f(\dd x, \dd v)}{|x-y|^2}
\end{multline*}
The two (inside) integrals are bounded using Lemma~\ref{lem:int_bound_1}. It leads to 
\begin{align*}
I_1 & \leq 
\frac{\kappa_\gamma}2 \left( \| \tilde f\|_\gamma \int_{\RR^4} f(\dd x, \dd v) |x-x_*|  +
 \| f\|_\gamma \int_{\RR^4} \tilde f(\dd y, \dd w) |y-y_*| \right)  \\
%& = 4 \pi \kappa_\gamma \|f_t\|_\gamma \int_{\RR^2} f_t(\dd x, \dd v) \left|x-\transpt_t^x(x,v) \right|  \leq  4 \pi \kappa_\gamma \|f_t\|_\gamma Q(t).
& \leq \frac{\kappa_\gamma}2 \left( \| \tilde f\|_\gamma Q  + \| f\|_\gamma \tilde Q \right).
%\label{bound_I1}
\end{align*}

We begin with the same strategy for $I_1^*$:
\begin{multline*}
4 \pi I_1^*  \le  
\int_{\RR^2} f(\dd x, \dd v) |x-x_*| \int_{|x_*-y_*|>|v_*-w_*|}   \frac{\tilde f(\dd y, \dd w)}{|x_*-y_*|^2} \\ +
\int_{\RR^2} \tilde f(\dd y, \dd w) |y-y_*| \int_{|x_*-y_*|>|v_*-w_*|}  \frac{f(\dd x, \dd v)}{|x_*-y_*|^2}. 
\end{multline*}
Then, we could perform the change of variable $(y,w) \to (y_*,w_*) = \ttranspt(y,w)$ in the inside  integral of the first term, and $(x,v) \to (x_*,v_*) = \transpt(x,v)$ in the second term. Integrals w.r.t. the $g$ and $\tilde g$ distributions appear according to the rule~\eqref{eq:pushforward}
\begin{multline*}
4 \pi I_1^*  \le  
\int_{\RR^2} f(\dd x, \dd v) |x-x_*| \int_{|x_*-y|>|v_*-w|}   \frac{\tilde g(\dd y, \dd w)}{|x_*-y|^2} \\ +
\int_{\RR^2} \tilde f(\dd y, \dd w) |y-y_*| \int_{|x-y_*|>|v-w_*|}  \frac{g(\dd x, \dd v)}{|x-y_*|^2},
\end{multline*}
and we are now in position to apply Lemma~\ref{lem:int_bound_1} as for $I_1$. The last step is the same that for $I_1$ and leads to the bound
\begin{align*}
I_1^*  & \leq \frac{\kappa_\gamma}2 \left( \| \tilde g\|_\gamma \int_{\RR^4} f(\dd x, \dd v) |x-x_*|  +
 \| g \|_\gamma \int_{\RR^4} \tilde f(\dd y, \dd w) |y-y_*| \right) \\
 & \leq \frac{\kappa_\gamma}2 \left( \| \tilde g\|_\gamma Q  + \| g \|_\gamma \tilde Q \right).
\end{align*}
Summing up the two bounds, we eventually get
\begin{equation} \label{eq:bound_I_I}
    I_1 + I_1^* \leq 
\frac{\kappa_\gamma}2 \left( ( \| \tilde f\|_\gamma + \| \tilde g\|_\gamma )
Q  + ( \|  f\|_\gamma + \| g \|_\gamma ) \tilde Q \right).
\end{equation}

\medskip
\textsc{Step 3  : Estimate on $I_2$}

Since $\bar \delta$ depends on the full set of variable $(x,v,y,w)$, we cannot directly use the Fubini Theorem and Lemma~\ref{lem:int_bound_2}. But hopefully, the bound~\eqref{eq:def_delta_bar} satisfied by $\bar \delta$ allows to bypass this difficulty.
We write
\[
D = \{ (x,v,y,w), \; |v-w| \le |x-y| \le |v-w|  + \delta(x,v) + \delta(y,w) \}.
\]
In view of~\eqref{eq:def_delta_bar}, $D \subset D_1 \cup D_2$ with
\begin{align*}
    D_1 & =  \left\{ (x,v,y,w), \; |v-w| \le |x-y| \le |v-w|  + 2  \delta(x,v)   \right\}, \\
    D_2 & =  \left\{ (x,v,y,w), \; |v-w| \le |x-y| \le |v-w|  + 2   \delta(y,w)  \right\},
\end{align*}
% where $\bar \delta_1(x,v)  = |x-x_*| +|v-v_*|$ and $\bar \delta_2 = |y-y_*| +|w-w_*|$.

Then, using the shortcut $\delta_1= \delta(x,v)$ and $\delta_2=\delta(y,w)$, we can split the integral $I_2$ and use Fubini theorem
\begin{align*}
     2 \pi I_2 &= \iint_{D}  f(\dd x, \dd v) \tilde f(\dd y, \dd w) \frac1{|x-y|} \\ 
     & \leq \iint_{D_1}  f(\dd x, \dd v) \tilde f (\dd y, \dd w) \frac1{|x-y|} + 
     \iint_{D_2}  f(\dd x, \dd v) \tilde f(\dd y, \dd w) \frac1{|x-y|} \\
     & \leq \int_{\RR^4}  f(\dd x, \dd v) \int_{|v-w|\le |x-y| \le |v-w| + \delta_1}  \frac{\tilde f(\dd y, \dd w)}{|x-y|} \\
     & \hspace{2cm} +
     \int_{\RR^4}  \tilde f(\dd y, \dd w) \int_{|v-w|\le |x-y| \le |v-w| +  \delta_2}  \frac{f(\dd x, \dd v)}{|x-y|}.
\end{align*}
The two ``inside'' integrals are now estimated with the help of Lemma~\ref{lem:int_bound_2}
\begin{align}
I_2 & \le   \kappa_\gamma 
\left( \| \tilde f \|_\gamma  \int_{\RR^2}  f(\dd x, \dd v) \delta(x,v) +
     \|  f \|_\gamma  \int_{\RR^2}  \tilde f (\dd y, \dd w) \delta(y,w) \right)  \nonumber \\
     &  =  \sqrt 2 \, \kappa_\gamma \left( \| \tilde f\|_\gamma  Q + \| f\|_\gamma \tilde Q \right),
% \int_{\RR^2}  f_t(\dd x, \dd v) \bigl( |x-x_*|+|v-v_*|\bigr) = 4 \pi \kappa_\gamma \| f_t\|_\gamma Q(t).
\label{eq:bound_I_2}
\end{align}
by definition of $Q$ and $\tilde Q$ and the inequalities
\[
Q = \int_{\RR^4} |z - \transpt (z)| f(\dd z) \le
\int_{\RR^4} \left(|x - \transpt^x (z)|+ |v - \transpt^v(z)| \right) f(\dd z) \le \sqrt 2 \, Q,
\]
and a similar relation for $\tilde Q$.

\medskip
\textsc{Step 4 : Estimate on $I_2^*$}

Writing 
\[
D^*  = \{ (x,v,y,w), \; |v_*-w_*| \le |x_*-y_*| \le |v_*-w_*|  + \delta(x,v) + \delta(y,w) \},
\]
$I_2^*$ split in a similar way in two terms
\begin{align*}
     2 \pi I_2^* &= \iint_{D^*}  f(\dd x, \dd v) \tilde f(\dd y, \dd w) \frac1{|x_*-y_*|} \\ 
     & \leq \int_{\RR^4}  f(\dd x, \dd v) \int_{|v_*-w_*|\le |x_*-y_*| \le |v_*-w_*| + \delta_1}  \frac{\tilde f(\dd y, \dd w)}{|x_*-y_*|} \\
     & \hspace{2cm} +
     \int_{\RR^4}  \tilde f(\dd y, \dd w) \int_{|v_*-w_*|\le |x_*-y_*| \le |v_*-w_*| + \delta_2}  \frac{f(\dd x, \dd v)}{|x_*-y_*|}.
\end{align*}
The first  integrals (in the Fubini order) could be bounded using the change of variable 
$(y,w) \to \transpt_t(y,w) = (y_*,w_*)$ and $(x,v) \to \transpt_t(x,v)=(x_*,v_*)$ respectively. Then,
\begin{align*}
     2 \pi I_2^*     & \leq \int_{\RR^4}  f(\dd x, \dd v) \int_{|v_*-w|\le |x_*-y| \le |v_*-w| +  \delta_1}  \frac{\tilde g(\dd y, \dd w)}{|x_*-y|} \\
     & \hspace{2cm} +
     \int_{\RR^4}  \tilde f(\dd y, \dd w) \int_{|v-w_*|\le |x-y_*| \le |v-w_*| +  \delta_2} \frac{g(\dd x, \dd v)}{|x-y_*|}.
\end{align*}
Then as for $I_2$  first simple integrals are now estimated with the help of Lemma~\ref{lem:int_bound_2}, and we end up very similarly with
\begin{equation} \label{eq:bound_I_2*}
I_2^* \leq \sqrt 2 \,  \kappa_\gamma \left( \| \tilde g \|_\gamma  Q + \| g \|_\gamma \tilde Q \right)
%4 \pi \kappa_\gamma \| g_t\|_\gamma 
% \int_{\RR^2}  f_t(\dd x, \dd v) \bigl( |x-x_*|+|v-v_*|\bigr) = 4 \pi \kappa_\gamma \| g_t\|_\gamma Q(t).
\end{equation}

%\textbf{Maxime  : Il y a un pb ici avec la definition de $\delta$. Il faudrait pour l'utiliser après l'utilisation du pushforward, que $\transpt$ soit inversible. C'est sans doute le cas, on pourrait vérifier dans le livre de Villani, ou ailleurs. Une autre option est de réécrire cela avec des plans de transferts. Ca demande plus de modifs dans le fichier, mais sur ce cas, je suis à peu près sûr que ca marche au niveau maths.  Attendons un peu avant de trancher. }

\medskip
\textsc{Step 5 :  Gathering the estimates}

Summing the three previous estimates~\eqref{eq:bound_I_I},\eqref{eq:bound_I_2},\eqref{eq:bound_I_2*}, we obtain the final estimate
%Gathering the estimates~\eqref{I1-I2-bound-uniqueness},~\eqref{I3-I4-bound-uniqueness} and~\eqref{I5-I6-bound-uniqueness}, we end up with
\[
	I  \leq \frac3{\sqrt{2}} \kappa_\gamma \left( 
	\bigl(\| \tilde f \|_{\gamma}  +\|g \tilde \|_{\gamma} \bigr) Q +
    \bigl(\|  f \|_{\gamma}  +\|g  \|_{\gamma} \bigr) \tilde Q	\right).
\]
\end{proof}

%%%%%%%%%%%%%%%%%%%%%%%%%%%%%%%%%%%%%%%%%%%%%%%%%%%%%
%
%
%%%%%%%%%%%%%%%%%%%%%%%%%%%%%%%%%%%%%%%%%%%%%%%%%%%%%
\section{A new proof of the existence of solutions}\label{existence-of-weak-solutions}

In this section, we prove Theorem~\ref{theorem-existence}, namely the existence of global weak solutions to~\eqref{limit-equation}. Our proof is quite different from the one in~\autocite{BOS2016}, which is  the consequence of a physical  derivation of the model using compactness methods. Here we start from an ad-hoc mollification of the force kernel, not physically motivated, and provide quantitative stability estimates in the Wasserstein metric between solutions to the mollified equation and solutions to the limit one. 
The estimates proved here are quite similar to the stability estimate~\eqref{stability-estimate} for the limit model proved in the previous section: the novelty is to treat the fact that we use kernels with different mollification parameters, and that we need to carefully estimate the (small) errors that the mollification introduce.

First, we will precise the mollification of the kernel that we shall use, and construct unique and regular solutions to a mollified equation. We will then show that when the mollification parameter goes to zero, the sequence of solutions is indeed a Cauchy sequence for the Wasserstein metric, and eventually its limit will be shown to be a weak solution to~\eqref{limit-equation}.

\subsection{Mollification of the kernels}

We assume that $f_0 \in \proba_1(\RR^4) \cap L^\infty_\gamma(\RR^4)$, for a $\gamma >2$.
%Let $\kervit$ (resp. $\keracc$) be the kernel of the velocity field $\vit[f]$ (resp. acceleration field $\acc[f]$). These two interaction kernels are given by
%\begin{align*}
%	\kervit(x,v) &= \frac{1}{2\pi} \frac{\pp{x}}{|x|^2} \one_{|x|>|v|}, \\
%	\keracc(x,v) &= -\frac{1}{2\pi} \frac{\pp{v}}{|v|^2} \one_{|x| \leq |v|},
%\end{align*}
%and satisfy
%\begin{align*}
%	\vit[f] &= \kervit \conv f, \\
%	\acc[f] &= \keracc \conv f.
%\end{align*}
%These interaction kernels being singular, they will be smoothed with the help of a mollifier $(\chi_{\eps})_{\eps>0}$ defined by
We start by smoothing the interaction potential $K$ defined in~\eqref{eq:def_pot} and its derivative $J$ defined in~\eqref{eq:def_J}.  We introduce a family of mollifier
\begin{align} \label{def:mollifier}
\forall\, (x,v) \in \RR^4, \; 	\chi_{\eps}(x,v) = \frac{1}{\eps^4} \chi \left ( \frac{x}{\eps} \right ) \chi \left ( \frac{v}{\eps} \right ),
\end{align}
built with a function $\chi$ such that
\begin{equation*}
	\chi \in \contcomp^{\infty}(\RR^2), \quad
	\supp(\chi) \subset \ball(0,1), \quad 
	\chi \geq 0, \quad 
	\int \chi = 1.
\end{equation*}

Given $\eps>0$, we define the mollified potential $K_\eps = \chi_\eps * K$ and kernel $J_\eps$ 
\begin{equation}
J_\eps =  \chi_\eps * J, \quad \text{i.e. }
J_\eps(x,v) = \frac{1}{\eps^4} \int_{\RR^4}  \chi \left ( \frac{x-y}{\eps} \right ) \chi \left ( \frac{v-w}{\eps} \right ) J(y,w)\dd y \dd w.  \label{def:Jeps}
\end{equation}
%of $\kervit$ (resp. $\keracc$) will be denoted $\kervit_{\eps}$ (resp. $\keracc_{\eps}$) and they are given by
%
The following lemma shows that the mollified kernels are bounded and Lipschitz continuous.
\begin{lemma}\label{bounded-lipschitz-mollifiers}
For any $\eps > 0$ 
\[
	\left \| J_{\eps} \right \|_{\infty}  \leq \frac{\|\chi\|_{\infty} \pi}{\eps}, \qquad 
	\left \| \nabla J_{\eps} \right \|_{\infty}  \leq \frac{\| \nabla \chi\|_{\infty} \pi }{\eps^2}.
\]
\end{lemma}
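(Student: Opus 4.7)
The plan is to exploit the product structure of $\chi_\eps$, the pointwise bound $|J(y,w)| \le \frac{1}{2\pi|y|} \mathbf{1}_{|y| \ge |w|}$ coming from \eqref{eq:def_J}, and to integrate in $w$ first (which is where the indicator provides the main gain) before tackling the singular integral in $y$.

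For the first bound, I would start from the definition \eqref{def:Jeps} and insert the pointwise estimate on $J$:
\[
|J_\eps(x,v)| \le \frac{1}{2\pi \eps^4} \int_{\RR^2 \times \RR^2} \chi\!\left(\tfrac{x-y}{\eps}\right) \chi\!\left(\tfrac{v-w}{\eps}\right) \frac{\mathbf{1}_{|w| \le |y|}}{|y|}\, \dd y\, \dd w .
\]
By Fubini, the inner integral in $w$ satisfies
\[
\int \chi\!\left(\tfrac{v-w}{\eps}\right) \mathbf{1}_{|w| \le |y|}\, \dd w \le \int \chi\!\left(\tfrac{v-w}{\eps}\right) \dd w = \eps^2,
\]
using $\int \chi = 1$ after the change of variables $u = (v-w)/\eps$. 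For the remaining integral in $y$, I bound $\chi \le \|\chi\|_\infty$ and use the fact that $\chi((x-y)/\eps)$ is supported in $B(x,\eps)$, together with the rearrangement-type observation that, since $y \mapsto 1/|y|$ is radially decreasing, $\int_{B(x,\eps)} \frac{\dd y}{|y|} \le \int_{B(0,\eps)} \frac{\dd y}{|y|} = 2\pi \eps$. Combining everything yields
\[
|J_\eps(x,v)| \le \frac{1}{2\pi \eps^2} \cdot \|\chi\|_\infty \cdot 2\pi \eps = \frac{\|\chi\|_\infty}{\eps} \le \frac{\pi \|\chi\|_\infty}{\eps},
\]
which is the claimed estimate (actually slightly sharper than stated).

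For the second bound, I would note that $\nabla J_\eps = \nabla \chi_\eps * J$ since $\chi_\eps$ is smooth while $J$ is only locally integrable. Because of the product structure of $\chi_\eps$, a derivative in $x$ (resp.\ $v$) falls on the $x$ (resp.\ $v$) factor of $\chi_\eps$ and produces one extra $1/\eps$, giving $\|\nabla \chi_\eps\|_{L^1_{x,v}} \lesssim \|\nabla \chi\|_\infty / \eps$ in essence. Concretely, for $\nabla_x J_\eps$ the very same argument as above goes through, with $\chi((x-y)/\eps)$ replaced by $(\nabla\chi)((x-y)/\eps)$ in absolute value and an extra $\eps^{-1}$; for $\nabla_v J_\eps$ the roles of the two factors are swapped, and one uses $|(\nabla \chi)((v-w)/\eps)| \le \|\nabla \chi\|_\infty \mathbf{1}_{B(0,\eps)}((v-w)/\eps)$ so that the $w$-integral gives $\pi \eps^2 \|\nabla\chi\|_\infty$ instead of $\eps^2$. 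Either way one ends up with an extra factor $1/\eps$ compared with the previous bound, yielding $\|\nabla J_\eps\|_\infty \le \pi \|\nabla \chi\|_\infty / \eps^2$.

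The main (mild) obstacle is really just the $w$-integration in the $\nabla_v$ case, where one has to decide whether to exploit the indicator $\mathbf{1}_{|w| \le |y|}$ (as in the $\|J_\eps\|_\infty$ bound) or simply the compact support of $\nabla \chi$; the latter is what produces the $\pi$ in the stated constant. Everything else is a routine application of Fubini, compact support of $\chi$, and the rearrangement bound $\int_{B(x,\eps)} \dd y/|y| \le 2\pi\eps$; no delicate estimate is required.
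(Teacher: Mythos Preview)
Your approach is essentially the same as the paper's: bound $|J(y,w)|\le \tfrac{1}{2\pi|y|}$, integrate out the $w$-variable, bound the remaining $\chi$-factor by $\|\chi\|_\infty$ on its support, and use the rearrangement inequality $\int_{B(x,\eps)}\tfrac{\dd y}{|y|}\le \int_{B(0,\eps)}\tfrac{\dd y}{|y|}=2\pi\eps$; for the gradient, move the derivative onto $\chi_\eps$ and pick up the extra $\eps^{-1}$. Your bound $\|J_\eps\|_\infty\le \|\chi\|_\infty/\eps$ is indeed sharper than the stated one because you use $\int\chi=1$ exactly in the $w$-integral, whereas the paper's computation effectively replaces that integral by the volume $\pi$ of the support.

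One small slip: in the $\nabla_v J_\eps$ case your $w$-integral correctly produces $\pi\eps^2\|\nabla\chi\|_\infty$, but the remaining $y$-integral still carries the factor $\chi((x-y)/\eps)$, which your argument bounds by $\|\chi\|_\infty\cdot 2\pi\eps$. So the honest constant from your sketch is $\pi\|\chi\|_\infty\|\nabla\chi\|_\infty/\eps^2$, not $\pi\|\nabla\chi\|_\infty/\eps^2$. This is immaterial for the lemma's purpose (only finiteness of the Lipschitz constant is used downstream), and the paper's own proof treats the constants at a comparable level of precision.
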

\begin{proof}
	By definition,
	\begin{align*}
		\left | J_{\eps}(x,v) \right | &= \left | \int_{\ball(0,1)^2} \chi(\dd y, \dd w) J(x-\eps y, v-\eps w) \right |
		\leq \frac1{2\pi} \int_{\ball(0,1)^2} \frac{\chi(\dd y, \dd w)}{|x-\eps y|} \\
		&\leq \frac{\|\chi\|_{\infty} }{2 \pi \eps}  \int_{\ball(0,1)^2} \frac{\dd y \dd w}{\left | \frac{x}{\eps}-y \right |} = \frac{\|\chi\|_{\infty} }{2 \eps} \int_{\ball\left ( \frac{x}{\eps},1 \right )} \frac{\dd y}{|y|} 
		\leq \frac{\|\chi\|_{\infty} }{2 \eps} \int_{\ball(0,1)} \frac{\dd y}{|y|} = \frac{\|\chi\|_{\infty} \pi  }{\eps} ,
	\end{align*}
	where we have used a kind of rearrangement inequality : the integral on a radial function on a unit ball is maximal when the ball is centered at the origin
	\[
		\sup_{z \in \RR^2} \int_{\ball(z,1)} \frac{\dd y}{|y|} = \int_{\ball(0,1)} \frac{\dd y}{|y|} = 2 \pi.
	\]
	Using $\nabla_x \left ( \chi_{\eps} \conv J \right ) = (\nabla_x \chi_{\eps}) \conv J$ and $\nabla_v \left ( \chi_{\eps} \conv J \right ) = (\nabla_v \chi_{\eps}) \conv J$, and doing the same computation as above gives the bound for $\nabla J_{\eps}$, with an extra power of $\eps$ coming from the derivation of $\chi_\eps$.
	% The case of $\keracc_{\eps}$ is similar.
\end{proof}

\subsection{Mollification of densities}

% It is useful to estimate the distance between the initial condition for the limit equation and the mollified initial datum $f_0^\eps = f_0 \conv \chi_{\eps} $. 
We will also need to mollify the density for the control of the smoothed field. The above lemma  control the Wasserstein distance of order one between a probability distribution and its mollification. 
%It will help to control the distance between two different initial mollification of $f_0$. And 
We shall use it during the computation of the fields later. 

\begin{lemma}\label{wass-dist-initial-regularized}
Let $f \in \mathcal P(\RR^4)$. If $f^{\eps}$ denotes $f \conv \chi_{\eps}$, then for $ \eps>0$
\[
\wass_1(f,f^{\eps}) \leq 2 \eps.
\]
\end{lemma}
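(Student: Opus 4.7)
The plan is to build an explicit coupling between $f$ and $f^{\eps}$ whose transport cost can be computed directly, and then read the bound off the definition of $\wass_1$. The natural choice comes from the probabilistic interpretation of convolution: if $Z \sim f$ and $Y \sim \chi_{\eps}$ are independent, then $Z + Y \sim f \conv \chi_{\eps} = f^{\eps}$, and the pair $(Z, Z+Y)$ transports $f$ to $f^{\eps}$ at cost $\mathbb{E}\, |Y|$.

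Concretely, I would define the transference plan $\pi \in \proba(\RR^4 \times \RR^4)$ by
\[
    \int_{\RR^4 \times \RR^4} \varphi(z_1,z_2) \, \dd \pi(z_1,z_2) = \int_{\RR^4 \times \RR^4} \varphi(z, z+y) \, f(\dd z) \chi_{\eps}(y) \, \dd y
\]
for every continuous bounded $\varphi$. Using $\int \chi_{\eps} = 1$ shows immediately that the first marginal is $f$, and the definition of convolution shows that the second marginal is $f \conv \chi_{\eps} = f^{\eps}$. By the Kantorovich formulation of $\wass_1$,
\[
    \wass_1(f, f^{\eps}) \le \int_{\RR^4 \times \RR^4} |z_1 - z_2| \, \dd \pi(z_1, z_2) = \int_{\RR^4} |y| \, \chi_{\eps}(y) \, \dd y.
\]

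It then remains to estimate this last integral. Writing $y = (y_x, y_v) \in \RR^2 \times \RR^2$ and using the triangular bound $|y| \le |y_x| + |y_v|$ together with the product structure $\chi_{\eps}(y_x, y_v) = \eps^{-4} \chi(y_x/\eps) \chi(y_v/\eps)$ from~\eqref{def:mollifier}, the integral splits into two factorized pieces. Each of them, after the change of variable $z = y_x/\eps$ (resp. $z = y_v/\eps$) and using $\int_{\RR^2} \chi = 1$, reduces to $\eps \int_{\RR^2} |z| \chi(z) \, \dd z$. Since $\chi$ is supported in the unit ball, $|z| \le 1$ on the support, so that $\int |z| \chi(z) \, \dd z \le 1$, and each factor is bounded by $\eps$. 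Summing the two contributions yields $\wass_1(f, f^{\eps}) \le 2\eps$.

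There is no real obstacle here: this is a short and essentially routine computation. The only thing worth being careful about is the product structure of the four-dimensional mollifier $\chi_{\eps}$, which is what forces the constant $2$ (via the cruder bound $|y| \le |y_x| + |y_v|$) rather than a $\sqrt{2}$ that would arise from treating $y \in \RR^4$ as a single variable — but $2\eps$ is more than sufficient for the subsequent use of the lemma in controlling the error introduced by the mollification of the densities.
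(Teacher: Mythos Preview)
Your proof is correct and follows essentially the same approach as the paper: both construct the natural transference plan $\pi(\dd z_1,\dd z_2)=f(\dd z_1)\,\chi_{\eps}(z_2-z_1)\,\dd z_2$ (you present it probabilistically as the law of $(Z,Z+Y)$, the paper writes the measure directly), compute the cost as $\int |y|\,\chi_{\eps}(y)\,\dd y$, and then bound it by $2\eps$ using the product structure of $\chi_{\eps}$ and $\supp\chi\subset\ball(0,1)$.
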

\begin{proof}
    We will use an explicit transference plan. Using a transport map seems more difficult because we are not aware of any explicit transport map between $f$ and $f^\eps$, even if $f \in L^1(\RR^4)$. We define $\pi^{\eps} \in \mathcal{P}(\RR^4 \times \RR^4)$ as
	\[
	\pi^{\eps}( \dd x,\dd v,\dd y,\dd w) = f(\dd x,\dd v) \chi_{\eps}(y-x,w-v) \,\dd y \dd w. 
	\]
	It can be checked that $\pi^{\eps}$ is a transference plan, with first marginals $f$ and second  $f^{\eps}$. This implies that
	\begin{align*}
		\wass_1(f,f^{\eps}) &\leq \int_{\RR^4 \times \RR^4} |(x,v)-(y,w)| \, \dd \pi^{\eps}(x,v,y,w) \\
		&= \int_{\RR^4} f(\dd x, \dd v)  \int_{\RR^4} |(y-x,w-v)| \ \chi_{\eps}(y-x,w-v) \, \dd y \dd w \\
		&= \int_{\RR^4} f(\dd x \dd v)  \int_{\RR^4} |(y,w)| \ \chi_{\eps}(y,w) \, \dd y \dd w \\
		&= \eps \int_{\ball(0,1)^2} \bigl( |y|  + |w| \bigr) \ \chi(y) \chi(w) \, \dd y \dd w
		\leq 2 \eps,
	\end{align*}
	where we have used that $f$ is a probability (its total weight is one) on the third line and the definition~\eqref{def:mollifier} of $\chi_\eps$.
\end{proof}

We shall also need a control on the $L^\infty_\gamma$ norm after convolution. The appropriate property is stated in the next Lemma.

\begin{lemma}\label{bound-f0-eps}
	Let $\gamma >0$, $f \in L^\infty_\gamma$, and $\eps >0$. The following inequality holds
	\[
		\|f \conv \chi_{\eps}\|_{\gamma} \leq (1+\eps)^{2\gamma} \|f \|_{\gamma}.
	\]
\end{lemma}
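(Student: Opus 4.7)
The strategy is to exploit the elementary inequality~\eqref{ineq-x-x'}, already used in the proof of Proposition~\ref{bound-decreasing-f}, namely
\[
\frac{1+|x|}{1+|x'|} \leq 1+|x-x'|, \qquad x,x' \in \RR^2,
\]
together with the compact support of $\chi_\eps$: thanks to the normalisation of $\chi$ and the definition~\eqref{def:mollifier}, the function $\chi_\eps$ is supported in $\ball(0,\eps)\times\ball(0,\eps)$.

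Starting from the convolution formula
\[
(f \conv \chi_\eps)(x,v) = \int_{\RR^4} f(x-y,v-w)\, \chi_\eps(y,w)\,\dd y\,\dd w,
\]
I multiply both sides by $(1+|x|)^\gamma (1+|v|)^\gamma$ and insert the factor $(1+|x-y|)^\gamma(1+|v-w|)^\gamma$ at the numerator and denominator to recognise $\|f\|_\gamma$ inside the integral. This yields
\[
(1+|x|)^\gamma(1+|v|)^\gamma |(f \conv \chi_\eps)(x,v)| \leq \|f\|_\gamma \int_{\RR^4} \frac{(1+|x|)^\gamma(1+|v|)^\gamma}{(1+|x-y|)^\gamma(1+|v-w|)^\gamma}\, \chi_\eps(y,w)\,\dd y\,\dd w.
\]
On the support of $\chi_\eps$ one has $|y|\le \eps$ and $|w|\le \eps$, so the inequality above gives
\[
\frac{1+|x|}{1+|x-y|} \leq 1 + |y| \leq 1+\eps, \qquad \frac{1+|v|}{1+|v-w|} \leq 1+|w| \leq 1+\eps.
\]
Raising to the power $\gamma$ and using that $\chi_\eps$ integrates to one, the integral is bounded by $(1+\eps)^{2\gamma}$, and taking the essential supremum in $(x,v)$ yields the claimed bound.

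There is no real obstacle here: the proof is a direct application of the triangular-type inequality~\eqref{ineq-x-x'} combined with the compact support of the mollifier. The only small care to take is that the factor $(1+\eps)$ appears twice (once from the position variable, once from the velocity variable), which accounts for the exponent $2\gamma$ in the final bound.
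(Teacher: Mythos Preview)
Your proof is correct and follows essentially the same approach as the paper: both use the inequality~\eqref{ineq-x-x'} to compare the weights at $x$ and $x-y$ (resp.\ $v$ and $v-w$), exploit the compact support of $\chi_\eps$ in $\ball(0,\eps)\times\ball(0,\eps)$ to bound $|y|,|w|\le\eps$, and conclude using $\int\chi_\eps=1$. The only cosmetic difference is that the paper bounds $|f(x-y,v-w)|$ first and then extracts the weight, whereas you multiply by the weight upfront; the two presentations are equivalent.
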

\begin{proof}
	With the help of inequality~\eqref{ineq-x-x'}, it comes :
	\begin{align*}
(f \conv \chi_{\eps})(x,v) &= \int_{\ball(0,\eps)} \chi_{\eps}(y,w) f(x-y,v-w) \, \dd y \dd w \\
		&\leq \int_{\ball(0,\eps)} \chi_{\eps}(y,w) \frac{\|f\|_{\gamma}}{(1+|x-y|)^{\gamma}(1+|v-w|)^{\gamma}} \, \dd y \dd w \\
		&\leq \frac{\|f\|_{\gamma}}{(1+|x|)^{\gamma}(1+|v|)^{\gamma}} \int_{\ball(0,\eps)} \chi_{\eps}(y,w) (1+|y|)^{\gamma}(1+|w|)^{\gamma} \, \dd y \dd w.
	\end{align*}
	Then
	\[
		(f\conv \chi_{\eps})(x,v) \leq \frac{(1+\eps)^{2\gamma}\|f\|_{\gamma}}{(1+|x|)^{\gamma}(1+|v|)^{\gamma}}.
	\]
	Multiplying by~$(1+|x|)^{\gamma}(1+|v|)^{\gamma}$ and taking the supremum on $(x,v)$, we obtain the statement of the lemma.
\end{proof}

%%%%%%%%%%%%%%%%%%%%%%%%%%%%%%%%%%
\subsection{The mollified equation}

The mollified fields $\vit_{\eps}[f]$ et $\acc_{\eps}[f]$ are then defined very similarly to $\vit[f]$ in~\eqref{vit-expr} and $\acc[f]$ in~\eqref{acc-expr} :
\begin{equation}  \label{regularized-velocity-field} 
	\vit_{\eps}[f] = J_{\eps} \conv f, \qquad 
	\acc_{\eps}[f] = (J_{\eps} \circ S ) \conv f. 
\end{equation}
where we recall that $S$ is the permutation of the variables $(x,v)$. Using that $J_\eps = J \ast \chi_\eps $ and the commutativity of convolutions, we may also write
\begin{equation} \label{regularized-velocity-field-bis}
	\vit_{\eps}[f] = J \conv (f \conv \chi_\eps), \qquad 
	\acc_{\eps}[f] = (J \circ S ) \conv (f \conv \chi_\eps). 
\end{equation}
For those mollified fields, the bound of Proposition~\ref{bounded-fields} holds uniformly in~$\eps$.
\begin{lemma}\label{bound-fields-eps}
	Let $f\in L^1(\RR^4)\cap L^{\infty}(\RR^4)$ and the fields $\vit_{\eps}[f]$ and $\acc_{\eps}[f]$ be defined by~\eqref{regularized-velocity-field}. Then 
\[
	\|\vit_{\eps}[f] \|_{\infty} + \|\acc_{\eps}[f] \|_{\infty} \leq c \, \|f\|_1^{\frac34} \|f\|_{\infty}^{\frac14},
	\quad \text{with  } c = \frac{2^{\frac54}}{3 \sqrt \pi}.
\]
\end{lemma}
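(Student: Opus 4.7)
The key observation I would use is the alternative form~\eqref{regularized-velocity-field-bis}, which rewrites
\[
\vit_{\eps}[f] = J \conv (f \conv \chi_\eps) = \vit[f \conv \chi_\eps], \qquad \acc_{\eps}[f] = \acc[f \conv \chi_\eps].
\]
This identity is the whole point: the smoothed field on $f$ equals the unsmoothed field applied to the smoothed density $f \conv \chi_\eps$. Since the latter is a bona fide function in $L^1 \cap L^\infty(\RR^4)$ (being the convolution of an $L^1 \cap L^\infty$ function with a smooth probability density), Proposition~\ref{bounded-fields} applies directly and yields
\[
\|\vit_{\eps}[f]\|_\infty = \|\vit[f\conv \chi_\eps]\|_\infty \leq c\, \|f\conv \chi_\eps\|_1^{\frac 34}\|f\conv \chi_\eps\|_\infty^{\frac 14},
\]
and similarly for $\acc_{\eps}[f]$.

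The second step is to control the norms of the convolution. By Young's convolution inequality, since $\chi_\eps \ge 0$ and $\int \chi_\eps = 1$ (so $\|\chi_\eps\|_1 = 1$), one has
\[
\|f \conv \chi_\eps\|_1 \le \|f\|_1 \|\chi_\eps\|_1 = \|f\|_1, \qquad \|f \conv \chi_\eps\|_\infty \le \|f\|_\infty \|\chi_\eps\|_1 = \|f\|_\infty.
\]
Substituting these two bounds into the previous inequality gives exactly
\[
\|\vit_{\eps}[f]\|_\infty \leq c\, \|f\|_1^{\frac 34}\|f\|_\infty^{\frac 14},
\]
and the same bound for $\|\acc_{\eps}[f]\|_\infty$, which combine to give the claimed estimate (up to a harmless factor of $2$ on the stated constant $c$, which is the only minor cosmetic point to check).

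There is no real obstacle here: the proof is essentially a one-line reduction to Proposition~\ref{bounded-fields} through the commutation of convolutions~\eqref{regularized-velocity-field-bis} combined with the most basic Young inequality. The only mild care needed is to justify that one may legitimately invoke Proposition~\ref{bounded-fields} on $f \conv \chi_\eps$ in place of $f$, which is immediate since $f \conv \chi_\eps \in L^1 \cap L^\infty(\RR^4)$ as soon as $f$ is.
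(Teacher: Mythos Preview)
Your proof is correct and follows essentially the same approach as the paper: rewrite $\vit_\eps[f]=\vit[f\conv\chi_\eps]$ via~\eqref{regularized-velocity-field-bis}, invoke Proposition~\ref{bounded-fields}, and then use $\|f\conv\chi_\eps\|_1\le\|f\|_1$ and $\|f\conv\chi_\eps\|_\infty\le\|f\|_\infty$ (the paper phrases this as ``the infinite norm decreases under convolution'' rather than citing Young, but it is the same fact). Your remark about the factor~$2$ is also apt: the paper's own proof only establishes the bound for each of $\vit_\eps[f]$ and $\acc_\eps[f]$ separately, so the ``$+$'' in the stated lemma appears to be a minor slip inherited from Proposition~\ref{bounded-fields}, where a comma is used instead.
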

\begin{proof}
    Using the equalities~\eqref{regularized-velocity-field-bis} and applying Proposition~\ref{bounded-fields} we get that 
    \[ \left\| \vit_{\eps}[f] \right\|_\infty \le c \, \|f \conv \chi_\eps \|_1^{\frac34} \|f \conv \chi_\eps \|_{\infty}^{\frac14}.
    \]
	And the infinite norm decreases under convolution $\| \chi_\eps \conv f \|_\infty \le  \| \chi \|_1 \| f \|_\infty = \| f \|_\infty $ and so does the $L^1$-norm. This concludes the proof.
\end{proof}

For $\eps>0$, we introduce the mollified equation
\begin{equation}\label{regularized-equation}
	\partial_t f^{\eps} + \vit_{\eps}[f^{\eps}] \cdot \nabla_x f^{\eps} + \acc_{\eps}[f^{\eps}] \cdot \nabla_v f^{\eps} = 0,
\end{equation}
associated to the initial condition
$ \displaystyle f^{\eps}|_{t=0} = f_0$.

The mollified kernel $J_\eps$ being Lipschitz continuous (see lemma~\ref{bounded-lipschitz-mollifiers}), the existence of a unique solution $f^{\eps} \in \cont(\RR_+;\proba_1(\RR^4))$ to~\eqref{regularized-equation} with initial datum $f_0$ is ensured for $\eps>0$. 
%as for the limit equation~\eqref{limit-equation}. 
We refer to subsection~\ref{subsec:tr_eq} for details. %\autocite{DOB1979,BRA1977,NEU1984}. Moreover, as $f_0$ is absolutely continuous from the assumptions of Theorem~\ref{theorem-existence}, then so do $f_t^{\eps}$. It results from the fact that 
$f_t^\eps$ is obtained as the pushforward of 
$f_t^{\eps} = \pushfwd{Z_t^{\eps}} f_0$, where $Z_t^{\eps}$ denotes the flow associated to equation~\eqref{regularized-equation}.
%, and that equation~\eqref{regularized-equation} is conservative ($\nabla_{x,v} \cdot (J_\eps,J_\eps \circ S )=0$). 
%Not only are the measures transported, but also their densities. For more details, we refer to~\autocite[Theorem 5.34]{VIL2003}.

As for smooth solutions of the limit equation~\eqref{limit-equation}, the boundedness of the fields implies the propagation of the $L^\infty_\gamma$-norms along solutions to~\eqref{regularized-equation}. The next lemma is an adaptation of Proposition~\ref{bound-decreasing-f}
%if the solution follows the characteristics, which is the case here because $\vit_{\eps}$ and $\acc_{\eps}$ are Lipschitz continuous (without any assumption on $\|f_t^{\eps}\|_{\gamma}$). This is what the following lemma states.
\begin{lemma} \label{lem:gamma_eps}
	If $f^{\eps}_t$ is the unique regular solution to the mollified equation~\eqref{regularized-equation} associated to the initial datum $f_0^\eps = f_0 $ qith $f_0 \in L^\infty_\gamma$ for some $\gamma >2$, then the following estimate holds :
	\begin{equation}\label{bound-f-eps}
		\|f^{\eps}_t\|_{\gamma} \leq \left(1+ c \, \|f_0\|_\infty^{\frac14} \|f_0\|_1^{\frac34} t \right)^{2\gamma} \|f_0\|_{\gamma}, \quad t\geq 0,
	\end{equation}
where $c$ is the constant defined in Proposition~\ref{bounded-fields}.
\end{lemma}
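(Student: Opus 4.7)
The plan is to mirror, almost verbatim, the proof of Proposition~\ref{bound-decreasing-f}, replacing the original advection fields and flow by their mollified counterparts, and substituting Lemma~\ref{bound-fields-eps} in place of Proposition~\ref{bounded-fields}. All the ingredients have already been set up in the preceding subsections; the task reduces to chaining them together.

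First, I would observe that $J_\eps = \chi_\eps \conv J$ inherits the potential structure of $J$: with $K_\eps = \chi_\eps \conv K$ smooth, one has $J_\eps = \nabla_x K_\eps$, so that $(\vit_\eps[f],\acc_\eps[f])$ is a perpendicular gradient in $x$ (resp.\ $v$) of the smooth potential $\varphi_\eps[f] = K_\eps \conv f$. As such, the four-dimensional field $(\vit_\eps[f],\acc_\eps[f])$ on $\RR^4$ is divergence-free in $(x,v)$. Combined with the global Lipschitz bound of Lemma~\ref{bounded-lipschitz-mollifiers}, the Cauchy--Lipschitz theorem yields a globally defined, volume-preserving flow $Z_t^\eps$, and $f_t^\eps = f_0 \circ (Z_t^\eps)^{-1}$ by the argument of subsection~\ref{subsec:tr_eq}.

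Volume preservation then gives $\|f_t^\eps\|_p = \|f_0\|_p$ for every $p \in [1,\infty]$, in particular for $p=1$ and $p=\infty$. Plugging these identities into Lemma~\ref{bound-fields-eps} applied at each time $t$ yields the time-uniform bound
\[
\|\vit_\eps[f_t^\eps]\|_\infty + \|\acc_\eps[f_t^\eps]\|_\infty \le c\,\|f_0\|_\infty^{1/4}\|f_0\|_1^{3/4},
\]
and integrating the reverse characteristics~\eqref{eq:char_reverse} driven by this bounded field produces the displacement estimate
\[
\bigl|(Z_t^\eps)^{-1}(z)-z\bigr| \le c\,t\,\|f_0\|_\infty^{1/4}\|f_0\|_1^{3/4}.
\]

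To conclude, I would apply the elementary inequality~\eqref{ineq-x-x'} separately to the $x$ and $v$ components of $(Z_t^\eps)^{-1}(z)$ versus $z$, exactly as in the last step of Proposition~\ref{bound-decreasing-f}. Writing $f_t^\eps(z) = f_0((Z_t^\eps)^{-1}(z))$ and multiplying through by $(1+|x|)^\gamma(1+|v|)^\gamma$ gives
\[
(1+|x|)^\gamma(1+|v|)^\gamma f_t^\eps(x,v) \le \|f_0\|_\gamma \bigl(1 + c\,t\,\|f_0\|_\infty^{1/4}\|f_0\|_1^{3/4}\bigr)^{2\gamma},
\]
and the essential supremum in $(x,v)$ yields~\eqref{bound-f-eps}. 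No genuine obstacle arises, since the mollification makes the advection field globally smooth and the $L^p$ conservation trivial; the only point that deserves a line of justification is the divergence-free property of $(\vit_\eps,\acc_\eps)$, which follows immediately from its representation as a perpendicular gradient of the smooth potential $\varphi_\eps = K_\eps \conv f$.
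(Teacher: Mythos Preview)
Your proposal is correct and follows exactly the approach the paper takes: it explicitly says the proof ``is very similar to the one of Proposition~\ref{bound-decreasing-f}'', with the only adaptation being the use of Lemma~\ref{bound-fields-eps} in place of Proposition~\ref{bounded-fields}. Your write-up is in fact more detailed than the paper's, which leaves the argument at that one-sentence sketch; the additional justification you give for the divergence-free structure of $(\vit_\eps,\acc_\eps)$ is a useful clarification.
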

\begin{proof}
The proof is very similar to the one of Proposition~\ref{bound-decreasing-f}. The only necessary adaptation is to replace the use of the bound of Proposition~\ref{bounded-fields} in the limit case ($\eps=0$), by the one given by Lemma~\ref{bound-fields-eps}.

%Remark that the factor $(1+\eps)^{2\gamma}$ in the bound comes from the estimates on $\| f_0^\eps \|_\gamma$ given in Lemma~\ref{bound-f0-eps}.

%%%%%%%%%%%%%%%%%%%%%%%%%%%%%%%%%%%%%%%%%%%%%%  Debut commentaire
\iffalse
	By lemma~\ref{bound-f0-eps}, we have that
	\begin{equation}\label{decay-bound-f0-eps}
		(f_0 \conv \chi_{\eps})(x,v) \leq \frac{(1+\eps)^{2\gamma}\|f_0\|_{\gamma}}{(1+|x|)^{\gamma}(1+|v|)^{\gamma}}.
	\end{equation}
	Then, the proof of proposition~\ref{bound-decreasing-f} can be adapted to the mollified equation because $f_t^{\eps}$ follow the characteristics ($\vit_{\eps}$ and $\acc_{\eps}$ are Lipschitz continuous). This gives
	\[
		f^{\eps}_t(x,v) \leq \frac{(1+\eps)^{2\gamma}(1+\alpha_{\eps}t)^{2\gamma}\|f_0\|_{\gamma}}{(1+|x|)^{\gamma}(1+|v|)^{\gamma}},
	\]
	with
	\[
		\alpha_{\eps} = C(\| f_0 \conv \chi_{\eps} \|_1+\| f_0 \conv \chi_{\eps} \|_{\infty}),
	\]
	where lemma~\ref{bound-fields-eps} and the conservation of $L^p$ norms for the mollified equation~\eqref{regularized-equation} have been used.
	As $\| f_0 \conv \chi_{\eps} \|_1=\|f_0\|_1=1$, and $\| f_0 \conv \chi_{\eps} \|_{\infty} \leq \|f_0\|_{\infty}$, then $\alpha_{\eps} \leq \alpha=\alpha(\|f_0\|_{\infty})$, with $\alpha$ independent of $\eps$. This proves the lemma.
	\fi
%%%%%%%%%%%%%%%%%%%%%%%%%%%%%%%%%%%%%%%%%%%% Fin commentaire
\end{proof}

\subsection{The stability estimate in the mollified setting}

%\paragraph{Sketch of the proof}
The sequence $(f^{\eps})_{\eps>0}$ is hoped to have a limit when $\eps \to 0$. To this purpose, it will be proved that it is a Cauchy sequence by estimating the Wasserstein distance between $f^{\eps}$ and $f^{\eps'}$ for $\eps$ and $\eps'$ positive.
This is done by using again the techniques developed in the previous section, precisely Proposition~\ref{prop:key_estim}.
%where a stability estimate between two solutions to~\eqref{limit-equation} was found. % Almost the same notations will be used.
The control on the Wasserstein distance is in fact obtained by a control on the associated trajectories. That later control allows in fact to pass to the limit also on the trajectories. Thanks to this, we show that the limit $f$ of the sequence $(f_\eps)_{\eps >0}$ is in fact a solution of the expected limit equation.

\medskip
\textsc{Step 1 : Construction of the coupling and first estimate}

%Let $\transpt_0^{\eps,\eps'}$ be the optimal transport map between $f_0^{\eps}$ and $f_0^{\eps'}$ (which always exists since $f_0^\eps$ has a density). This means that $f_0^{\eps'} = \pushfwd{\transpt_0^{\eps,\eps'}} f_0^{\eps}$, and
%\[
%\wass_1(f_0^{\eps},f_0^{\eps'}) = \int_{\RR^4} \left |(x,v)-\transpt_0^{\eps,\eps'}(x,v) \right | \, f_0^{\eps}(\dd x, \dd v).
%\]
We denote $z \to Z_t^{\eps}(z) = (X_t^{\eps}(z),V_t^{\eps}(z))$ for $\eps \ge 0$  the flow of the characteristics associated to $f^{\eps}_t$ at any time $t\geq 0$, having the value $z= (x,v)$ at initial time. They satisfy
\begin{equation} \label{eq:def_Tepseps}
	\frac{\dd}{\dd t} Z_t^{\eps}(z) = ( \vit_{\eps}[f_t^{\eps}], \acc_{\eps}[f_t^{\eps}] )(Z_t^{\eps}(z)), \quad Z_0^{\eps}(z)=z. 
\end{equation}
$Z_t^{\eps}$ is smooth, invertible and transports $f_0$ onto $f_t^{\eps}$ : $\pushfwd{Z_t^\eps} f_0 = f_t^\eps$.
For $\eps, \eps'>0$, a transport map $\transpt_t^{\eps,\eps'}$, transporting $f_t^{\eps}$ onto $f_t^{\eps'}$, can be built by setting
\[
	\transpt_t^{\eps,\eps'} = Z_t^{\eps'} \circ (Z_t^{\eps})^{-1}.
\]
This is illustrated on figure~\ref{transport-regularized-equation-two-solutions}.
\begin{figure}[b]
	\centering
	\begin{tikzpicture}
	%\draw[very thin,color=gray] (0,0) grid (2,2);
	\node at (0,0) {$f_t^{\eps}$};
	\node at (2,0) {$f_t^{\eps'}$};
	\node at (1,2) {$f_0$};
	%\node at (2,2) {$f_0^{\eps'}$};
	%\draw[->] (0.3,2) -- (1.7,2);
	\draw[->] (0.3,0) -- (1.7,0);
	\draw[->] (0,0.3) -- (.9,1.7);
	\draw[->] (1.1,1.7) -- (2,0.3);
	%\node[above] at (1,2) {$\transpt_0^{\eps,\eps'}$};
	\node[below] at (1,0) {$\transpt_t^{\eps,\eps'}$}; 
	\node[left] at (0.5,1) {$(Z_t^{\eps})^{-1}$};
	\node[right] at (1.6,1) {$Z_t^{\eps'}$};
	\end{tikzpicture}
	\caption{Transport between two solutions to~\eqref{regularized-equation}}
	\label{transport-regularized-equation-two-solutions}
\end{figure}
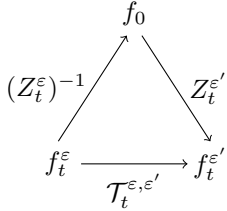
By definition of $\wass_1$ and formula~\eqref{eq:pushforward}, we have
\begin{align*}
	\wass_1(f_t^{\eps},f_t^{\eps'}) &\leq \int_{\RR^4} \left |z-\transpt_t^{\eps,\eps'}(z) \right | \, f_t^{\eps}(\dd z) 
	= \int_{\RR^4} \left |Z^{\eps'}_t(z)-Z^\eps_t(z) \right | \, f_0(\dd z) \\
	\\
& \leq \int_{\RR^4} \left(\sup_{0 \le s \le t } \left |Z^{\eps'}_s(z)-Z^\eps_s(z) \right | \right) \, f_0(\dd z)
%\int_{\RR^4} \left |z-\transpt_t^{\eps,\eps'}(z) \right | \, f_t^{\eps}(\dd z) \\
%	\int_{\RR^4} \left |z-\transpt_s^{\eps,\eps'}(z) \right | \, f_t^{\eps}(\dd z) \\
%	&= \int_{\RR^4} \left | \left ( X_t^{\eps}(z)-X_t^{\eps'}(z), V_t^{\eps}(z)-V_t^{\eps'}(z) \right ) \right | \, f_0^{\eps}(\dd z) \\
	\eqqcolon Q^{\eps,\eps'}(t).
\end{align*}
For brevity, we will write simply  $\transpt_t$ for  $\transpt_t^{\eps,\eps'}$. The notation $\transpt_t=(\transpt_t^x,\transpt_t^v)$ will be used again. 
%Aiming at using a Grönwall's lemma, let us derive $Q^{\eps,\eps'}$ with respect to $t$. 
%For this, remark that the supremum does not really raise difficulty when differentiating:
%\begin{itemize}
%\item If the supremum is reached strictly before time $t$ (and not at time $t$), then the supremum is constant on a neighbourhood of time $t$ and its derivative is $0$. 
%\item If the sup is reached at time $t$, then we can remove the supremum and differentiate classically. 
%\end{itemize}

By definition of $Z^\eps_t$,
\begin{align*}
\sup_{0 \le s \le t } \left |Z^{\eps'}_s(z)-Z^\eps_s(z) \right | &  =
\sup_{0 \le s \le t }  \left |  \int_0^s
\left[ ( \vit_{\eps}[f_u^{\eps}], 
\acc_{\eps}[f_u^{\eps}] )(Z_u^{\eps}(z)) -
( \vit_{\eps'}[f_u^{\eps'}], 
\acc_{\eps'}[f_u^{\eps'}] )(Z_u^{\eps'}(z)) \right] \, \dd u  \right | \\
&  \le 
 \int_0^t  
 \left | 
( \vit_{\eps}[f_u^{\eps}], 
\acc_{\eps}[f_u^{\eps}] )(Z_u^{\eps}(z)) -
( \vit_{\eps'}[f_u^{\eps'}], 
\acc_{\eps'}[f_u^{\eps'}] )(Z_u^{\eps'}(z))
\right | \, \dd u.
\end{align*}
Using now the definition~\ref{eq:def_Tepseps} of the mapping~$\transpt_t = \transpt^{\eps,\eps'}_t$.
\begin{multline*}
\sup_{0 \le s \le t } \left |Z^{\eps'}_s(z)-Z^\eps_s(z) \right | 
 \le 
 \int_0^t  
 \left | 
 \vit_{\eps}[f_u^{\eps}](Z_u^{\eps}(z))- \vit_{\eps'}[f_u^{\eps'}](\transpt_u(Z_u^{\eps}(z))) \right| \, \dd u  \\ 
 + \int_0^t \left| 
\acc_{\eps}[f_u^{\eps}] (Z_u^{\eps}(z))  )-
\acc_{\eps'}[f_u^{\eps'}] )(\transpt_u(Z_u^{\eps}(z))) 
\right| \, \dd u.
\end{multline*}
Integrating now in $z$ w.r.t. $f_0$,
\[
Q^{\eps,\eps'}(t)  = \int_{\RR^4} \sup_{0 \le s \le t } \left |Z^{\eps'}_s(z)-Z^\eps_s(z) \right | f_0(\dd z)  \le \int_0^t I^{\eps,\eps'}_u  \dd u  + \int_0^t J^{\eps,\eps'}_u \dd u.
\]
with (using the fact that $f_t = \pushfwd{Z^\eps_t} f_0$)
\begin{align*}
    I^{\eps,\eps'}_u  & = \int_{\RR^4} \left |  \vit_{\eps}[f_u^{\eps}](Z_u^{\eps}(z)) - \vit_{\eps'}[f_t^{\eps'}](\transpt_u(Z_t^{\eps}(z))) \right |  f_0^{\eps}(\dd z) \\
    & = \int_{\RR^4} \left |  \vit_{\eps}[f_u^{\eps}](z) - \vit_{\eps'}[f_u^{\eps'}](\transpt_u(z)) \right |  f_u^{\eps}(\dd z), \\
 \text{and similarly} \quad    J^{\eps,\eps'}_u  
    & = \int_{\RR^4} \left |  \acc_{\eps}[f_u^{\eps}](z) - \acc_{\eps'}[f_u^{\eps'}](\transpt_u(z)) \right |  f_u^{\eps}(\dd z).
\end{align*}

\iffalse
\begin{align*}
	Q^{\eps,\eps'}(t)  = \\
	&\leq \int \left | \left ( \vit_{\eps}[f_t^{\eps}](Z_t^{\eps}(z)) - \vit_{\eps'}[f_t^{\eps'}](Z_t^{\eps'}(z)), \acc_{\eps}[f_t^{\eps}](Z_t^{\eps}(z)) - \acc_{\eps'}[f_t^{\eps'}](Z_t^{\eps'}(z)) \right ) \right |  f_0^{\eps}(\dd z) \\
	&= \int \left | \left ( \vit_{\eps}[f_t^{\eps}](z) - \vit_{\eps'}[f_t^{\eps'}](\transpt_t(z)), \acc_{\eps}[f_t^{\eps}](z) - \acc_{\eps'}[f_t^{\eps'}](\transpt_t(z)) \right ) \right | \, f_t^{\eps}(\dd z) \\
	&\leq \int \left | \vit_{\eps}[f_t^{\eps}](z) - \vit_{\eps'}[f_t^{\eps'}](\transpt_t(z)) \right | \, f_t^{\eps}(\dd z)  + \int \left | \acc_{\eps}[f_t^{\eps}](z) - \acc_{\eps'}[f_t^{\eps'}](\transpt_t(z)) \right | \, f_t^{\eps}(\dd z) \\
	&\eqqcolon I^{\eps,\eps'}(t)+J^{\eps,\eps'}(t).
\end{align*}
\fi

%%%%%%%%%%%%%%%%%%%%%%%%%%%%%%%%%
\textsc{Step 2 : The Grönwall estimate}

We denote $\tilde f^\eps_t = f^\eps_t \conv \chi_\eps$ and $\tilde f^{\eps'}_t = f^{\eps'}_t \conv \chi_{\eps'} $. 
We also denote by $\ttranspt_t = \ttranspt_t^{\eps,\eps'}$ an optimal transport map from $\tilde f^\eps_t$ onto $\tilde f^\eps_t$. It  exists for any time since the distribution $\tilde f^\eps_t$ has a density. 
Remark that we have the control
\begin{equation} \label{wass-T-Tprime}
 \int_{\RR^2} \left| z - \ttranspt_t(z)\right| \tilde f^\eps_t(\dd z) = 
 \wass(\tilde f^\eps_t,\tilde f^{\eps'}_t)  \le  Q^{\eps,\eps'}(t) +2(\eps+\eps').
\end{equation}
This because $\wass(\tilde f^\eps_t,\tilde f^{\eps'}_t) \le \wass(\tilde f^\eps_t, f^{\eps}_t) +
\wass(f^\eps_t,f^{\eps'}_t) + \wass(f^{\eps'}_t,\tilde f^{\eps'}_t) \le \wass(f^\eps_t, f^{\eps'}_t) + 2( \eps+ \eps')$, according to the Lemma~\ref{wass-dist-initial-regularized}.

To control $I^{\eps,\eps'}(t)$, we can use Proposition~\ref{prop:key_estim} with $f = f^\eps_t$, $g=f^{\eps'}_t$,
$\tilde f = \tilde f^\eps_t$, $\tilde g = \tilde f^{\eps'}_t$, $\transpt = \transpt_t^{\eps,\eps'}$
$\ttranspt = \ttranspt_t^{\eps,\eps'}$. It comes
\[
I^{\eps,\eps'}(t)  \le
\frac{3 \kappa_\gamma}2 \left( \bigl(\| \tilde f^\eps_t \|_\gamma + \| \tilde f^{\eps'}_t \|_\gamma\bigr)
\int_{\RR^4} f^\eps_t(\dd z) \left|z - \transpt_t(z) \right|
+ \bigl(\|  f^\eps_t \|_\gamma + \|  f^{\eps'}_t \|_\gamma \bigr)
\int_{\RR^4} \tilde f^\eps_t(\dd z) \left|z - \ttranspt_t(z) \right|
\right).
\]
The Lemma~\ref{lem:gamma_eps} also provide the interesting bound
$\|f^{\eps}_t\|_{\gamma} \leq (1+\alpha t)^{2\gamma} \|f_0\|_{\gamma}$ with $\alpha = c \|f_0\|_\infty^{\frac14}$. % \|f_0\|_1^{\frac34}$.
Next by definition of $\tilde f^\eps_t = f_t^\eps \conv \chi_\eps$ and Lemma~\ref{bound-f0-eps}, we have
\[
\| \tilde f^\eps_t \|_\gamma \le (1+ \eps^{2 \gamma}) \| f^\eps_t \|_\gamma
 \le (1+\eps)^{2\gamma}(1+\alpha t)^{2\gamma} \|f_0\|_{\gamma}.
\]
Using these two bounds in the previous estimate, we get
\[
I^{\eps,\eps'}(t)  \le
3 \kappa_\gamma  (1+\eps \vee \eps')^{2\gamma} (1+\alpha t)^{2\gamma} \|f_0\|_{\gamma} \left( 
\int_{\RR^4} f^\eps_t(\dd z) \left|z - \transpt_t(z) \right|
+ \int_{\RR^4} \tilde f^\eps_t(\dd z) \left|z - \ttranspt_t(z) \right|
\right).
\]
Assuming that $\eps, \eps' \le 1$, and using the estimate~\eqref{wass-T-Tprime}, we eventually get for all  $t \ge 0$
\begin{equation} \label{eq:Ieps_final}
I^{\eps,\eps'}(t)  \le \frac{C_\gamma}2 (1+\alpha t)^{2\gamma} 
 \left(  Q^{\eps,\eps'}(t) + \eps + \eps' \right)
 \quad \text{with } \quad  C_\gamma  := 12 \times 2^{2\gamma}\pi \kappa_\gamma   \|f_0\|_{\gamma}.
\end{equation}
By symmetry, the same estimate holds for $J^{\eps,\eps'}(t)$. It leads to the Grönwall estimate
\[
Q^{\eps,\eps'}(t)  \le 
C_\gamma  \int_0^t (1+\alpha u)^{2\gamma}   \left(  Q^{\eps,\eps'}(u) + \eps + \eps' \right) \, \dd u.
\]
It allows to perform the standard Grönwall lemma on the quantity $Q^{\eps,\eps'}(t) + \eps + \eps'$ which has the same derivative than $Q^{\eps,\eps'}(t)$. With the fact that $Q^{\eps,\eps'}(0)=0$, we get
\begin{equation} \label{eq:eps_gron_estim}
W(f^\eps_t,f^{\eps'}_t)  \le Q^{\eps,\eps'}(t)  \le (\eps + \eps') e^{C'_\gamma (1+\alpha t)^{2\gamma +1}}
\quad \text{with } \quad 
C'_\gamma  := \frac{C_\gamma}{2 \gamma \alpha}.
\end{equation}

\subsection[Convergence of the sequence and recognition of the limit]{Convergence of the sequence $(f^\eps)_{\eps > 0}$ and recognition of the limit}

We continue the proof of Theorem~\ref{theorem-existence}. We show thanks to the new stability estimate that the sequence $(f^\eps)_{\eps > 0}$ is in fact a Cauchy sequence, and so is $(Z^\eps_\cdot)_{\eps >0}$ in the appropriate space.

\medskip
\textsc{Step 3 : Convergence of distributions and flows}

Doing $\eps,\eps' \to 0$ shows that $(f^{\eps})_{\eps>0}$ is a Cauchy sequence in $C([0,T],\mathcal{P}_1(\RR^4))$, endowed with the distance
\[
	d(u,v)=\sup_{t \in [0,T]} \wass_1(u_t,v_t).
\]
This space being complete (see~\autocite[theorem~6.18]{VIL2009}), $(f^{\eps})_{\eps>0}$ has a limit $f$ belonging to the space $C([0,T];\mathcal{P}_1(\RR^4))$ for any $T>0$, so $f$ exists globally in time.
Note that doing only $\eps' \to 0$ in~\eqref{eq:eps_gron_estim} gives the following estimate for $\eps \in (0,1)$ 
\begin{equation} \label{eq:eps_lim_wass}
d(f^{\eps},f) = \sup_{t \in [0,T]} \wass_1(f_t^{\eps},f_t) \leq 
\eps  e^{C'_\gamma (1+\alpha T)^{2\gamma +1}}.
\end{equation}

Define $\Omega = \{ f_0 > 0 \}$. For any $T>0$, the space $\mathcal L_T : = L^1\left( \Omega, C([0,T],\RR^4)\right)$ endowed with the distance 
\begin{equation} \label{eq:eps_lim_LL}
d'_T(X,Y) = \int f_0(\dd z ) \left(\sup_{0 \le t \le T} |X_t(z) - Y_t(z) | \right),
\end{equation}
is also a complete metric space\footnote{We do not have any precise reference for that point but its proof could be done as an adaptation of the proof of the completness of a classical $L^1$ space.}.
So there exists a limit $Z \in \mathcal L_T$ to the sequence $(Z^\eps)_{\eps >0}$ as $\eps \to 0$. And letting $\eps' \to 0$  in~\eqref{eq:eps_gron_estim} Fatou's Lemma leads to
\begin{equation} \label{eq:eqs_lim_LL}
d'_T(Z^\eps,Z) = \int_{\RR^4} 
f_0(\dd z) \left( \sup_{0 \le t \le T} |Z^\eps_t(z) - Z_t(z) | \right)
\le \eps  e^{C'_\gamma (1+\alpha T)^{2\gamma +1}}.
\end{equation}

%\subsection[Compatibility of the two limits]
\medskip
\textsc{Step 4 : Compatibility of the two limits :  $f_t = \pushfwd{Z_t} f_0$ and incompressibility.}

Here we will prove that for any positive time $t$, $f_t = \pushfwd{Z_t} f_0$. It is a simple consequence of the fact that this holds true for any $\eps$ for the regularized solutions $f^\eps$ and the regularized characteristic $Z^\eps$, and the convergence we obtain in the two spaces. So according to~\eqref{eq:measure_preserving} for any lipschitz test function $\varphi$ :
\[
\int  \varphi(z) f^\eps_t(\dd z) =  \int_{\RR^4}  \varphi(Z_t^\eps) f_0(\dd z). 
\]
But thanks to~\eqref{eq:eps_lim_wass}, the term of the l.h.s. goes  as $\eps \to 0$ to $\int  \varphi(z) f_t(\dd z)$. In the same time, according to~\eqref{eq:eps_lim_LL}, the r.h.s.  goes to 
$\int_{\RR^4}  \varphi(Z_t) f_0(\dd z)$. 

It is also classical that we can pass to the limit in the estimate~\eqref{bound-f-eps} to get for all $t \ge 0$
\begin{equation} \label{eq:lim_gamma_norm}
\| f_t \|_\gamma \le (1+ \alpha t)^{2 \gamma} \|f_0\|_\gamma
\end{equation}
with the same constant $\gamma$.
Use for instance the duality
\[
\| f\|_\gamma = \sup \left\{ \int_{\RR^4} f \varphi,  \; \varphi \text{ smooth and  } \int_{\RR^4} \frac{\varphi(z) \, \dd z}{(1+ |x|)^\gamma)(1+|v| )^\gamma}   \le 1 \right\}
\]
and pass to the limit in it.

\medskip
\textsc{Step 5 : The limit flow satisfies the expected ODE} 

We will show that for all $x,v \in \Omega_0$, all $t >0$:
\begin{equation} \label{eq:lim_ODE}
X_t(z)  - x - \int_0^t \vit[f_s](X_s(z), V_s(z))\dd s=0, 
\qquad 
V_t(z)  - v - \int_0^t \vit[f_s](V_s(z), V_s(z))\dd s=0.
\end{equation}
We treat only the $x$ part, the second equality may be proved in a very similar way. 
We start from the fact that the flow $Z^\eps_t=(X_t^\eps,V_t^\eps)$ associated to the mollified equation satisfies the ODE written in integral form for all $x,v \in \Omega_0$, all $t >0$:
\[
X^\eps_t(z)  - x - \int_0^t \vit[f_s^\eps](X^\eps_s(z), V^\eps_s(z))\dd s =0, 
%\qquad 
%V^\eps_t(z)  = v + \int_0^t \vit[f_s^\eps](V^\eps_s(z), V^\eps_s(z))\dd s.
\]
We will integrate the equality on $x$ again $f_0$ : 
\[
\int_{\RR^4} f_0(\dd z)
\left|
X^\eps_t(z)  - x - \int_0^t \vit[f_s^\eps](X^\eps_s(z), V^\eps_s(z))\dd s
\right| = 0.
\]
Then using this term as a pivot, we deduce that
\begin{multline} \label{eq:limit_flow}
\int_{\RR^4} f_0(\dd z)
\left|
X_t(z)  - x - \int_0^t \vit[f_s](Z_s(z))\dd s
\right| \le  
\int_{\RR^4} f_0(\dd z) \left| X^\eps_t(z) - X_t(z) \right|
\\ +
\int_0^t \dd s  \int_{\RR^4} f_0(\dd z) \left| \vit[f_s^\eps](Z^\eps_s(z)) - \vit[f_s](Z_s(z)) \right| 
.
\end{multline}
The first term in the r.h.s. goes to zero according to~\eqref{eq:eps_lim_LL} as $\eps \to 0$. The second term is also a estimate of difference in advection field created by different distributions, so we will again apply Proposition~\ref{prop:key_estim}. In fact, using the change of variable $z'= Z^\eps_s(z)$, we write
\[
\int_{\RR^4} f_0(\dd z) \left| \vit[f_s^\eps](Z^\eps_s(z)) - \vit[f_s](Z_s(z)) \right| =
\int_{\RR^4} f_s^\eps (\dd z) \left| \vit[f_s^\eps](z) - \vit[f_s](Z_s \circ (Z^\eps_s)^{-1} (z) ) \right|.
\]
We could apply Proposition~\ref{prop:key_estim} with 
$f = \tilde f = f^\eps_s$, $g = \tilde g  = f_s$, and $\transpt=\ttranspt = Z_s \circ (Z^\eps_s)^{-1}$ which is a transport of $f^\eps_s$ onto $f_s$ according to the previous step.
We obtain then the bound
\begin{align*}
\int_{\RR^4} f_0(\dd z) \left| \vit[f_s^\eps](Z^\eps_s(z)) - \vit[f_s](Z_s(z)) \right|  & \le 
3  \kappa_\gamma 
\left( \| f_s^\eps \|_\gamma +  \| f_s \|_\gamma \right)
\int_{\RR^4} f_s^\eps (\dd z) \left| z - Z_s \circ (Z^\eps_s)^{-1} (z) \right| \\
& \le C  (1+\alpha s)^{2 \gamma} \int_{\RR^4} f_0(\dd z) \left| Z^\eps_s(z) - Z_s(z) \right|,
\end{align*}
where the constant $C$ depends on $\| f_0 \|_\gamma$ but does not depend on $\eps$, provided that $\eps \le 1$ (for instance).  We used in fact Lemma~\ref{lem:gamma_eps} and Lemma~\ref{bound-f0-eps} as in Step 2 to control $\|f_s\|_\gamma$ and $\|f_s^\eps\|_\gamma$ in  term of $\eps$ and $\|f_0\|_\gamma$.
Using the last estimate in~\eqref{eq:limit_flow}, we get
\[
\int_{\RR^4} f_0(\dd z)
\left| X_t(z)  - x - \int_0^t \vit[f_s](Z_s(z))\dd s \right| \le C \frac{
\left(1 + \alpha t \right)^{2 \gamma+1}}{2 \alpha \gamma}
\int_{\RR^4} f_0(\dd z) \left( \sup_{0 \le s \le t} \left| Z^\eps_s(z) - Z_s(z) \right| \right).
\]
From~\eqref{eq:eqs_lim_LL} and since this is valid for any $\eps>0$, the l.h.s. of~\eqref{eq:limit_flow} vanishes.

\medskip
\textsc{ Step 7 : Conclusion of the proof}

We know that $f_t = \pushfwd{Z_t} f_0$, and that the flow family $(Z_t)_{t \ge 0}$ satisfies the expected ODE~\eqref{eq:lim_ODE}.
From the bound~\eqref{eq:eps_lim_LL} on the $L^\infty_\gamma$-norm of $f_t$ and Proposition~\ref{prop-fields-lipschitz}, we know that the field $(\vit[f_s],\acc[f_s])$ is Lipschitz in $(x,v)$. This implies that $f$ is solution to the expected limit equation.

%\appendix

%\section[Proof of the completeness of a useful space]{Proof of the completeness of $\mathcal L_T$}

%?????????????????????,,  USEFULL  ??????

\iffalse
\section{Proof of the lemma about the differentiability of the supremum in time}

\begin{lemma} \label{lem:diff}
Let $X : \RR^+ \to \RR$ be a continuous function, with a right derivative at every point of $\RR^+$.
Then the ```glidding supremum'' function $\overline X$ $\overline X(t) = \sup_{0 \le s \le t } X(s)$ is continuous, everywhere right differentiable and 
\[
4
\]
\end{lemma}
\fi

\printbibliography

\end{document}